\definecolor{darkred}{rgb}{0.75,0,0.3}
\newcommand\ab{\mathbf{a}}
\newcommand\AND{\quad\text{and}\quad}
\newcommand\bb{\mathbf{b}}
\newcommand\Bb{\mathbf{B}}
\newcommand\C{\mathbb C}
\newcommand\Cc{\boldsymbol \Delta} %{\mathsf{Comp}}
\newcommand\dps{\displaystyle}
\newcommand\Ff{\mathbb{F}}
\newcommand\fb{\mathbf{f}}
\newcommand\Fb{\mathbf{F}}
\newcommand\gb{\mathbf{g}}
\newcommand\HH{\mathcal{H}}
\newcommand\hor{\mathfrak{h}}
\newcommand\Mbd{\mathcal{M}} %{\boldsymbol\partial}
\newcommand\Mcpt{ \Cc_{\textrm{Mart},\,t} }    %{\mathbf{M}}
\newcommand\Mcpr{ \Cc_{\textrm{Mart},\,\rho} }    %{\mathbf{M}}
\newcommand\N{\mathbb N}
\newcommand\ol{\overline}
\newcommand\Pfi{\mathit{\Phi}}
\newcommand\Prob{\mathsf{Pr}}
\newcommand\Proj{\mathsf{Proj}}
\newcommand\Q{\mathbb{Q}}
\newcommand\R{\mathbb R}
\newcommand\Rbd{\mathcal{R}}
\newcommand\Rcp{\Cc_{\textrm{ratio}}}
\newcommand\st{\mathbf{s}}
\newcommand\supp{\operatorname{\sf supp}}
\newcommand\T{\mathbb{T}}
\newcommand\uno{\mathbf{1}}
\newcommand\vb{\mathbf{v}}
\newcommand\wb{\mathbf{w}}
\newcommand\wh{\widehat}
\newcommand\wt{\widetilde}
\newcommand\Xx{\mathcal{X}}
\newcommand\Z{\mathbb Z}
\numberwithin{equation}{section}
\newtheoremstyle{mythm}% name
  {9pt}%      Space above, empty = `usual value'
  {9pt}%      Space below
  {\itshape}% Body font
  {0pt}%         Indent amount (empty = no indent, \parindent = paraindent)
  {\bfseries}% Thm head font
  {}%        Punctuation after thm head
  { }% Space after thm head: \newline = linebreak
  {\thmnumber{(#2)}\thmname{ #1}\thmnote{ #3}}%         Thm head spec
\newtheoremstyle{mydef}% name
  {9pt}%      Space above, empty = `usual value'
  {9pt}%      Space below
  {\normalfont}% Body font
  {0pt}%         Indensf suppt amount (empty = no indent, \parindent = paraindent)
  {\bfseries}% Thm head font
  {}%        Punctuation after thm head
  { }% Space after thm head: \newline = linebreak
  {\thmnumber{(#2)}\thmname{ #1}\thmnote{ #3}}%         Thm head spec
\theoremstyle{mythm}
\newtheorem{thm}[equation]{Theorem.}
\newtheorem{pro}[equation]{Proposition.}
\newtheorem{lem}[equation]{Lemma.}
\newtheorem{cor}[equation]{Corollary.}
\theoremstyle{mydef}
\newtheorem{dfn}[equation]{Definition.}
\newtheorem{exa}[equation]{Example.}
\newtheorem{que}[equation]{Questions.}
\newtheorem{rmk}[equation]{Remark.}
\begin{document}$\,$ \vspace{-1truecm}
\title{Ratio limits and Martin boundary}
\author{\bf Wolfgang Woess}
\address{\parbox{.8\linewidth}{Institut f\"ur Diskrete Mathematik,\\ 
Technische Universit\"at Graz,\\
Steyrergasse 30, A-8010 Graz, Austria\\}}
\email{woess@tugraz.at}
\date{April 2021} 
\thanks{Supported by Austrian Science Fund projects FWF P31889-N35 and W1230} 
%The second author acknowledges support by a visiting professorship
%at Universit\'e de Tours
%}
\subjclass[2020] {60G50; %Sums of independent random variables; random walks 
                  60J05}%Markov processes with discrete parameter
%                  \keywords{reflected random walk, recurrence, invariant measure, 
%local contractivity, stochastic iterated function system}
\begin{abstract}
Consider an irreducible Markov chain which satisfies a ratio limit theorem, and let $\rho$
be the spectral radius of the chain. 
We investigate the relation of the the $\rho\,$-Martin boundary with the boundary induced by 
the $\rho\,$-harmonic kernel which appears in the ratio limit. Special emphasis is on random 
walks on non-amenable groups, specifically, free groups and hyperbolic groups. 
\end{abstract}

\maketitle

\markboth{{\sf W. Woess}}%J. Kloas and W. Woess}}
{{\sf Ratio limits and Martin boundary}}
\baselineskip 15pt

%%%%%%%%%%%%%%%%%%%%%%%%%%%%%%%%%%%
\section{Introduction}\label{sec:intro}

Let $\Xx$  be the denumerable state space of a time-homogeneous 
Markov chain $(X_n)_{n \ge 0}$ with transition matrix $P$. We assume that
it is \emph{irreducible and aperiodic:} for any pair of points $x,y \in \Xx$, there is 
$k_{x,y} \in \N$ such that  $p^{(n)}(x,y) >0$ for all $n \ge k_{x,y}\,$, 
where $p^{(n)}(x,y)$ denotes the 
$n$-step transition probability from $x$ to $y$.
We will often (but not always) also 
assume that $P$ has   \emph{finite range,}  that is, for each $x \in \Xx$, there are only finitely many $y$ with $p(x,y) > 0$. (This simplifies parts of the technicalities.)
We choose and fix a \emph{root} $e \in \Xx$. In case of a group, this will be the identity.

In a variety of cases, part of which will be displayed below, 
one knows that a \emph{ratio limit theorem} holds, that is, there
is a function (kernel) $h : \Xx^2 \to (0,\infty)$ such that
\begin{equation}\label{eq:rl}
\lim_{n \to \infty} \frac{p^{(n)}(x,y)}{p^{(n)}(e,e)} = h(x,y) \quad \text{for all }\; x,y \in \Xx\,.
\end{equation}

Now let $\rho=\rho(P) = \limsup p^{(n)}(x,y)^{1/n}$ be the so-called 
\emph{spectral radius} of the Markov chain. It is independent of $x$ and $y$ by irreducibility.
If \eqref{eq:rl} holds then in a large variety of cases (including finite range) one deduces that 
\begin{equation}\label{eq:rhoH}
Ph = hP = \rho \cdot h
\end{equation}
in the sense of matrix products. In particular, for any fixed $y$, the function 
$x \mapsto  h(x,y)$ is a positive \emph{$\rho\,$-harmonic} function: more generally, for
real $t \ge \rho$, a function
$f: \Xx \to \R$ is $t$-harmonic, if
\begin{equation}\label{eq:rhoh}
\sum_w p(x,w) f(w) = t\,f(x) \quad \text{for all }\;x \in \Xx.
\end{equation}
When $t=1$, one just speaks of a harmonic function. Let us write 
$\HH^+(P,t)$ for the convex cone of positive $t$-harmonic functions.

For each $x$, we have $h(x,y) \le C_x\, h(e,y)$ for all $y$,
where $1/C_x = \rho^k\, p^{(k)}(e,x)$ with the smallest $k$ such that
this is non-zero. Therefore, we can consider the normalised \emph{ratio limit kernel}
$$
H(x,y) = h(x,y)/h(e,y),
$$
and the function $y \mapsto H(x,y)$ is bounded by $C_x$ for each $y$. 

\begin{dfn}\label{def:rlc}
The \emph{ratio limit compactification\footnote{$\;$\cite{Dor} calls this the \emph{full} ratio limit compactification.}} $\Rcp(\Xx)$ of
$\Xx$ is the (up to homomorphism) unique compact Hausdorff space
which contains $\Xx$ as a discrete, dense subset and has the following
properties: 
\begin{itemize}
 \item each function $H(x, \cdot)$ extends continuously to $\Rcp(\Xx)\,$,
and denoting the extended kernel also by $H$,  
 \item if $\xi, \eta \in \Rbd(\Xx) = \Rcp(\Xx) \setminus \Xx$ are distinct,
 then there is $x \in \Xx$ such that $H(x,\xi) \ne H(x,\eta)$.
\end{itemize}
\end{dfn}

For the general construction of such a compactification, within the present context 
see e.g. \cite[Thm. 7.13]{WMarkov}.
At least when $P$ has finite range, each function 
$x \mapsto H(x,\xi)$, $\xi \in \Rcp(\Xx)\,$, is $\rho\,$-harmonic.

\smallskip

In relation with his current work on Cuntz algebras related to random walks,
%paper with {\sc Chen et al.} \cite{CDHLZ}, 
Adam {\sc Dor-On}, in an exchange that lead to the ``companion'' paper \cite{Dor},
has asked how one can describe this compactification in terms of a given 
structure of the underlying state space, and how 
it relates with the \emph{$\rho\,$-Martin compactification.} His questions were motivated by operator algebraic 
issues, and the answers provided here help with clarifying some of them. The Martin boundary will be 
recalled in the next section. 

\smallskip

The main interest is in the case when $\Xx = \Gamma$ is a finitely generated,
infinite group, and the Markov chain is a random walk induced by
a finitely supported probability measure $\mu$ on $\Gamma$, that is, 
$$
p(x,y)= \mu(x^{-1}y)\,.
$$
Irreducibility \& aperiodicity then amount to the property that the support of $\mu$ generates 
$\Gamma$ as a semi-group and that it is not contained in a coset of a proper normal
subgroup of $\Gamma$. Given the first of those two properties, the second will certainly
hold if $\mu(e) > 0$, where $e$ is the group identity.  (Assuming this is not a crucial restriction.)

If in this case we have a ratio limit theorem, then clearly $h(gx,gy) = h(x,y)$
for all $x,y,g \in \Gamma$, so that the function $f(x) = h(x,e)$ is $\rho\,$-harmonic, and
$h(x,y) = f(y^{-1}x)$. In that case, the ratio limit boundary is a $\Gamma$-space, and the 
elements of $\Gamma$ act continuously on $\Rcp(\Gamma)$.

\smallskip

For random walks on groups, a famous ratio limit theorem is due to {\sc  Avez}:

\begin{thm} [\cite{Av}]\label{thm:avez}
If the group $\Gamma$ is amenable and $\mu$ is symmetric, i.e., $\mu(x^{-1}) = \mu(x)$, 
then 
$$
\lim_{n \to \infty} \frac{p^{(n)}(x,y)}{p^{(n)}(e,e)} = 1 \quad \text{for all }\; x,y \in \Gamma\,.
$$
\end{thm}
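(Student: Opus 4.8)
The plan is to pass to $\ell^2(\Gamma)$, where $P$ becomes the self-adjoint convolution operator $Pf=\sum_s\mu(s)\,R_sf$, with $R_gf(x)=f(xg)$ and $L_gf(x)=f(g^{-1}x)$ the right and left translations; note that $L_g$ commutes with $P$, while the Dirichlet form of $P$ naturally involves the $R_g$. By left-invariance $p^{(n)}(x,y)=p^{(n)}(e,x^{-1}y)$, so it suffices to prove $p^{(n)}(e,g)/p^{(n)}(e,e)\to 1$ for every $g\in\Gamma$. The one external input that carries the amenability hypothesis is \emph{Kesten's theorem}: for a symmetric, irreducible, aperiodic walk, $\Gamma$ is amenable if and only if $\rho=\|P\|_{\ell^2}=1$. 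I would quote this as known; everything else is soft. I expect this invocation of Kesten — the genuine content of amenability — to be the real engine of the proof.

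First I would settle the even steps. Put $\psi_n=P^n\de_e/\|P^n\de_e\|$, a unit vector with $\|P^n\de_e\|^2=p^{(2n)}(e,e)$. Cauchy--Schwarz applied to $p^{(2n)}(e,g)=\langle P^n\de_e,P^n\de_g\rangle$ gives $p^{(2n)}(e,g)\le p^{(2n)}(e,e)$, hence $\limsup_n p^{(2n)}(e,g)/p^{(2n)}(e,e)\le 1$. The same inequality applied to $p^{(2n)}(e,e)=\langle P^{n-1}\de_e,P^{n+1}\de_e\rangle$ shows that $q_n:=p^{(2n+2)}(e,e)/p^{(2n)}(e,e)$ is non-decreasing; since $p^{(2n)}(e,e)=\prod_{k<n}q_k$ and $p^{(2n)}(e,e)^{1/2n}\to\rho$, we get $q_n\nearrow\rho^2=1$ by Kesten. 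The key observation is that $P^2$ is convolution by the symmetric measure $\mu*\mu$, so $1-q_n=\langle(I-P^2)\psi_n,\psi_n\rangle=\tfrac12\sum_g(\mu*\mu)(g)\,\|R_g\psi_n-\psi_n\|^2\to 0$. Thus $\|R_g\psi_n-\psi_n\|\to 0$ for every $g\in\supp(\mu*\mu)$, and by telescoping (using $R_aR_b=R_{ab}$) together with aperiodicity, which guarantees that this support generates $\Gamma$ as a semigroup, the same holds for every $g\in\Gamma$.

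The subtle point — and where I expect the main difficulty to lie — is that the quantity we want is $p^{(2n)}(e,g)/p^{(2n)}(e,e)=\langle L_g\psi_n,\psi_n\rangle$, a \emph{left} translation, whereas the Dirichlet form only controls \emph{right} translations. This is precisely where the symmetry of $\mu$ re-enters: $\psi_n(x)\propto\mu^{(n)}(x)=\mu^{(n)}(x^{-1})$ is invariant under the inversion $Jf(x)=f(x^{-1})$, and $JL_gJ=R_g$ with $J$ a unitary involution, so $\|L_g\psi_n-\psi_n\|=\|R_g\psi_n-\psi_n\|\to 0$. Hence $\langle L_g\psi_n,\psi_n\rangle=1-\tfrac12\|L_g\psi_n-\psi_n\|^2\to 1$, which settles the even steps.

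Finally, the odd steps follow formally. Conditioning on the last step, $p^{(2n+1)}(e,g)/p^{(2n)}(e,e)=\sum_s\mu(s)\,p^{(2n)}(e,gs^{-1})/p^{(2n)}(e,e)\to 1$ by the even-step result and finiteness of $\supp\mu$; taking $g=e$ gives $p^{(2n+1)}(e,e)/p^{(2n)}(e,e)\to 1$, and dividing the two statements yields $p^{(2n+1)}(e,g)/p^{(2n+1)}(e,e)\to 1$. Combining the two parities gives $p^{(n)}(e,g)/p^{(n)}(e,e)\to 1$ for all $g$, and hence the theorem. The only genuinely delicate maneuver is converting the spectral estimate $q_n\to 1$ into almost-invariance of the \emph{specific} vectors $\psi_n$ and bridging the left/right mismatch, which the symmetry hypothesis handles cleanly.
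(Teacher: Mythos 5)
Your argument is correct, but note that the paper offers no proof at all here: Theorem \ref{thm:avez} is quoted as an external result from \cite{Av}, so there is nothing internal to compare against. What you have written is essentially the standard (and, in substance, Avez's original) $\ell^2$ argument, and all the steps check out: $\psi_n=P^n\delta_e/\|P^n\delta_e\|$ is well defined since $\|P^n\delta_e\|^2=p^{(2n)}(e,e)$ by self-adjointness; the monotonicity of $q_n=p^{(2n+2)}(e,e)/p^{(2n)}(e,e)$ together with Kesten's theorem gives $q_n\nearrow 1$; the Dirichlet-form identity for the symmetric measure $\mu*\mu$ converts this into $\|R_g\psi_n-\psi_n\|\to 0$ on $\supp(\mu*\mu)$, which generates $\Gamma$ as a semigroup by irreducibility and aperiodicity; and the inversion $J$ legitimately transports right almost-invariance to left almost-invariance because $\mu^{(n)}$ is symmetric, which is exactly where the hypothesis $\mu(x^{-1})=\mu(x)$ is consumed. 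Two small remarks. First, the paper explicitly notes that finite support is not needed for this theorem, whereas your odd-step reduction invokes ``finiteness of $\supp\mu$''; this is easily repaired, since $p^{(2n)}(e,gs^{-1})/p^{(2n)}(e,e)\le 1$ by your Cauchy--Schwarz bound, so each summand is dominated by $\mu(s)$ and dominated convergence gives $\sum_s\mu(s)\,p^{(2n)}(e,gs^{-1})/p^{(2n)}(e,e)\to 1$ without any support assumption. Second, the identification $\lim_n p^{(2n)}(e,e)^{1/2n}=\rho$ (rather than merely $\le\rho$, with $\rho$ defined as a $\limsup$ over all $n$) deserves a one-line justification via $p^{(2n+1)}(e,e)\le\bigl(p^{(2n)}(e,e)\,p^{(2n+2)}(e,e)\bigr)^{1/2}$; with that inserted, the chain $q_n\nearrow\rho^2=1$ is airtight. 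With these two touches your proof is complete and self-contained, which is more than the paper provides.
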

(Finite support is not needed here.) In this case, we see that $\Rcp(\Xx)$ is the 
one-point compactification. We mention here that there are  symmetric random walks on amenable groups where the Martin boundary (and even the minimal one; see \S \ref{sec:martin}) 
is infinite. For example, for random walks on lamplighter groups over $\Z^d$ ($d \ge 3$),
the Poisson boundary is non-trivial, see {\sc Kaimanovich and Vershik}~\cite{KV}, whence there
are non-constant minimal harmonic functions. This is even true for certain lamplighter
random walks over $\Z$, see \cite{Wlamp}.

For non-amenable groups, there is a variety of results  which go beyond ratio limit theorems,
namely, \emph{local limit theorems} which provide an asymptotic evaluation of the
$n$-step transition probabilities. Typically (but not necessarily), they are of the form 
\begin{equation}\label{eq:local}
p^{(n)}(x,y) \sim C \,\, \beta(x,y)\, \rho^n\, n^{-\alpha} \quad \text{as }\;n \to \infty\,,
\end{equation}
where $\sim$ means that the quotient of the left and right hand sides tends to $1$,
and $C, \beta(x,y), \alpha > 0$.  
For results up to 2000, see \cite[Ch. III]{Wbook} and the references therein. 
The first of those results are due to {\sc Gerl}~\cite{Ge} and {\sc Sawyer}~\cite{Sa},
concerning random walks on free groups, resp. regular trees. The latest results
are by {\sc Gou\"ezel}~\cite{Gou1}, \cite{Gou2} and {\sc Dussaule} \cite{Dus}, for hyperbolic, resp. relatively hyperbolic groups. In between, there is a rather large body of work.  

\smallskip 

Whenever one has \eqref{eq:local}, this yields the ratio limit theorem with 
$h(x,y)= \beta(x,y)/\beta(e,e)$, and the kernel for the ratio limit compactification
is $H(x,y) = \beta(x,y) / \beta(e,y)$, where $e$ is the group identity.

\smallskip

In the present paper, we exhibit classes of random walks on non-amenable groups
where the ratio limit compactifictation coincides with the $\rho\,$-Martin compactification,
and one has a geometric description of the latter. Recall that $\rho<1$ on those groups
by {\sc Kesten}~\cite{Ke}.

After laying out some necessary 
preliminaries in \S \ref{sec:martin}, in \S \ref{sec:trees} we consider free groups and regular trees, and instead of stating and proving right away the most general result for general finite range random walks, we proceed step by step. Subsequently, in \S \ref{sec:hyperbolic}, 
we consider hyperbolic groups.  In \S \ref{sec:prod}, we discuss
how the ratio limit compactification interacts with taking direct or Cartesian products.
In \S \ref{sec:reduced}, we discuss the \emph{reduced} ratio limit boundary and show that in
the cases of \S \ref{sec:trees}, there is no reduction, while for hyperbolic groups, 
reduction may come only from a (finite) torsion subgroup.
The final \S \ref{sec:final} contains a brief discussion and an outlook on work 
to be done in the future.

It is on purpose that three different methods are displayed for (1) isotropic
random walks on trees, (2) finite range random walks on free groups, and (3) 
symmetric random walks on hyperolic groups (with finite range). In spite of
the fact that the most modern approach is based on the cited work of {\sc Gou\"ezel}~\cite{Gou1},
the older methods regarding (1), going back to {\sc Sawyer}~\cite{Sa}, resp.
(2), going back to {\sc Derriennic}~\cite{De} plus {\sc Lalley}~\cite{La}, are still
very valid in the author's view. Also, for the time being, (3) does not cover (1) and (2)
completely.

\medskip\noindent
\textbf{Acknowledgment.} The author thanks Adam Dor-On for posing the question,
for interesting exchanges on the subject, as well as for his comments on drafts of
this paper. Also, the author is extremely grateful to S\'ebastien Gou\"ezel for 
helpful input that facilitated the elaboration of the proof of Theorem
\ref{thm:hyp}. 

\section{Preliminaries; the Martin compactification}\label{sec:martin}

\noindent\textbf{A. The $t$-Martin compactification}
\\[5pt]
We let $t \ge \rho$ (focusing on the case when $t = \rho$). 
To describe the $t$-Martin compactification, consider for real $z > 0$ the 
\emph{Green function}
$$
G(x,y|z) = \sum_{n=0}^{\infty} p^{(n)}(x,y) \,z^n\,,\quad x,y \in \Xx\,.
$$
The radius of convergence of this power series is $r = 1/\rho$. At that value,
the series may either converge or diverge for all $x,y$. The latter is
the \emph{$\rho\,$-recurrent} case, the former the \emph{$\rho\,$-transient}
one. The quotient 
$$
F(x,y|z) = \frac{G(x,y|z)}{G(y,y,|z)} = \sum_{n=0}^{\infty} f^{(n)}(x,y) \,z^n
$$
is also a generating function: $f^{(n)}(x,y)$ is the probability that the 
Markov chain starting at $x$ first reaches $y$ at time $n \ge 0$. 
It is known \cite[Lemma 3.66]{WMarkov}  that $F(x,y|1/\rho)$ is always finite. 
In the case of a random walk on a group, it is known that one always has the
$\rho\,$-transient case, except when that group is virtually $\Z$ or $\Z^2$,
see e.g. {\sc Woess} \cite[Thm. 7.8]{Wbook}. 
For real $t \ge \rho$, 
the \emph{$t$-Martin kernel} is
$$
%K(x,y) = 
K(x,y|t) = \frac{F(x,y|1/t)}{F(e,y|1/t)}.
$$
Here we shall mainly work with $t = \rho$.
For fixed $y$, the function $x \mapsto K(x,y|\rho)$ satisfies \eqref{eq:rhoh}
at every $x \in \Xx \setminus y$. Furthermore, for fixed $x$, we have $K(x,\cdot|t) \le C_x$
with the same bound as above.  Thus, we can construct the compactification
$\Mcpt(\Xx)$ of $\Xx$, analogous to the one of Definition \ref{def:rlc}, for the $t$-Martin kernel $K(x,y|t)$ in the place
of $H(x,y)$. This is the $t$-Martin compactification, and $\Mbd_t(\Xx)
= \Mcpt(\Xx) \setminus \Xx$ is the \emph{$t$-Martin boundary.} Each function 
$x \mapsto K(x,\xi|t)$, $\xi \in \Mbd_t(\Xx)$, is $t$-harmonic. The significance of 
$\Mbd_t(\Xx)$ 
lies in the fact that every positive $t$-harmonic function $f$ on $\Xx$ has an integral 
representation 
$$
f(x) = \int_{\Mbd_t(\Xx)} K(x,\cdot|t)\, d\nu^f\,,
$$
where $\nu$ is a Borel measure on $\Mbd_t(\Xx)$. That measure is unique, if we require that
$\nu^f\bigl(\Mbd_t(\Xx) \setminus \Mbd_{t,\min}(\Xx)\bigr)=0$, where $\Mbd_{t,\min}(\Xx)$ is the \emph{minimal boundary:}
a positive $t$-harmonic function $f$ is called \emph{minimal,} if $f(e)=1$ and it cannot be written as
a convex combination of two distinct positive $t$-harmonic functions with value $1$ at $e$.
The minimal boundary is $\{ \xi \in \Mbd_t(\Xx) : K(\cdot,\xi|t) \text{ is minimal}\,\}$; it consists of all minimal $t$-harmonic functions.  

\smallskip

The finite range assumption simplifies a few things (for example, it yields that all 
extended Martin kernels are $t$-harmonic in the first variable), but is not needed for
the construction and properties. Also, aperiodicity is not needed, irreducibility
is sufficient, that is, for every pair $x,y \in \Xx$ there is $k = k_{x,y}$ such
that $p^{(k)}(x,y) > 0$. (It even suffices to have this only for $e$ and every $y$.)

\smallskip

For all these facts, see e.g. \cite[\S 24]{Wbook} or the 
references provided there. The largest part of the literature is on the ordinary $1$-Martin 
compactification. There is a simple tool to pass from the $t$-Martin compactification
to the latter. Namely, take one positive $t$-harmonic function $f$ and use the 
\emph{Doob transform:} define new transition probabilities by 
\begin{equation}\label{eq:doob}
p^f(x,y) = \frac{p(x,y)f(y)}{t\, f(x)}
\end{equation}
Then the $t$-Martin compactification of the original Markov chain is the 
$1$-Martin compactification of the Doob transform, and
$K_{P^f}(x,\cdot|1) = K(x,\cdot| t)/f(x)$. 

\medskip\goodbreak

\noindent\textbf{B. Comparing compactifications}
\\[5pt]
Let $\Cc(\Xx)$ and $\Cc'(\Xx)$ be two compactifications of the countable set $\Xx$,
that is, compact Hausdorff spaces which contain $\Xx$ as open and dense subsets. 
Then we say that the former is \emph{bigger} than the latter, if the identity
map on $\Xx$ extends to a continuous surjection $\Cc(\Xx) \to \Cc'(\Xx)$.
Clearly, if also $\Cc'(\Xx)$ is bigger than $\Cc(\Xx)$ then that continuous mapping
is a homeomorphism. In this case, we say that the two compactifications coincide \emph{geometrically.}

In typical cases, one has a ``natural'' geometric compactification of $\Xx$
and then wants to show that it coincides geometrically with the Martin compactification,
or that it is smaller than the Martin compactification.

In particular, we want to compare the ratio limit compactification with
the $\rho\,$-Martin compactification. In its geometric, resp. topological
meaning, this will mean that the latter is bigger than the former,
or that they coincide in the above sense. 

However, this is not the full picture; 
it does note yet fully reflect its analytic properties. 
One should also ask whether in that case the homoeomorphism 
$\tau: \Mcpr(\Xx) \to \Rcp(\Xx)$, in case it exists, is such that
\begin{equation}\label{eq:hom}
K(x,\xi|\rho) = H\bigl(x,\tau(\xi)\bigr) \quad \text{for all }\; x \in \Xx, \xi \in \Mbd_{\rho}(\Xx)\,.
\end{equation}
When we have two geometrically equal compactifications constructed
via two different kernels, let us say that they coincide \emph{analytically,} 
when the extended kernels coincide on the boundary. (``Coincide'' is of course meant
in terms of a homeomorphism as above.)

Indeed, no matter whether the ratio limit and $\rho\,$-Martin compactifications 
coincide geometrically or not:
whenever $\nu$ is a Borel measure on the ratio limit boundary $\Rbd$,
the function 
$$
f(x) = \int_{\Rbd} H(x,\cdot)\, d\nu
$$
is positive $\rho\,$-harmonic, and the above question basically amounts to asking whether
\emph{every} positive $\rho\,$-harmonic function arises in this way.

In general, it is for example known that for any two finite range, irreducible 
random walks on a free group, the Martin compactifications coincide geometrically,
but in general by no means analytically.

\begin{pro} \cite[Prop. 1]{PiWo2}\label{pro:compare}
Let $P, Q$ be two irreducible transition operators on $\Xx$ such that every positive 
harmonic function for $P$ is also harmonic for $Q$. Suppose that
\begin{itemize}
 \item the $1$-Martin compactifications of $P$ and $Q$ coincide geometrically, and
 \item the $1$-Martin boundary of $P$ is Dirichlet-regular, that is, every 
 continuous function on the boundary has a continuous extension to the entire
 compactification which is $1$-harmonic for $P$ on $\Xx$. 
\end{itemize}
Then the two compactifications coincide analytically, so that the positive $1$-harmonic
functions for $P$ and $Q$ coincide.
\end{pro}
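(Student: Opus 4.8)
The plan is to show first that the $P$- and $Q$-harmonic measures on the common boundary coincide, and then to upgrade this to pointwise equality of the two Martin kernels by using that Dirichlet regularity forces the harmonic measure to have full support. Throughout I identify the two geometrically coinciding compactifications, writing $\wh\Xx = \Xx \cup \Mbd(\Xx)$ for the common compactification and $\Mbd(\Xx)$ for the common boundary; for each fixed $x$ both kernels $K_P(x,\cdot)$ and $K_Q(x,\cdot)$ are then continuous on $\Mbd(\Xx)$. I also use that (in the transient situation relevant here) both chains converge almost surely to their minimal Martin boundary, so that via the geometric identification their trajectories converge in $\wh\Xx$ to a boundary point. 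Write $\nu^P_x$ for the $P$-harmonic (hitting) measure on $\Mbd(\Xx)$ started at $x$, so that $\nu^P_x = K_P(x,\cdot)\,\nu^P_e$ on the minimal boundary, and analogously $\nu^Q_x$; both are carried by the respective minimal boundaries.

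First I produce a supply of test functions. Fix a continuous $\varphi \ge 0$ on $\Mbd(\Xx)$. By Dirichlet regularity of $P$ it extends to a function $u_\varphi$ on $\wh\Xx$ that is continuous and $P$-harmonic on $\Xx$. Running the $P$-chain and using that $u_\varphi(X_n)$ is a bounded $P$-martingale converging to $u_\varphi(X_\infty)=\varphi(X_\infty)$ gives $u_\varphi(x)=\int_{\Mbd(\Xx)}\varphi\,d\nu^P_x$; in particular $u_\varphi\ge 0$, so $u_\varphi\in\HH^+(P,1)$, whence by hypothesis $u_\varphi$ is also $Q$-harmonic. Since $u_\varphi$ is bounded and continuous up to the boundary with boundary values $\varphi$, the very same martingale argument applied to the $Q$-chain yields $u_\varphi(x)=\int_{\Mbd(\Xx)}\varphi\,d\nu^Q_x$. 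Comparing the two expressions and letting $\varphi$ range over the continuous non-negative functions, I conclude $\nu^P_x=\nu^Q_x$ for every $x$; call this common measure $\nu_x$.

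Next I show that $\nu_e$ has full support on $\Mbd(\Xx)$. Dirichlet regularity means precisely that $\int\varphi\,d\nu^P_x\to\varphi(\xi)$ as $x\to\xi$ for every continuous $\varphi$ and every $\xi\in\Mbd(\Xx)$, i.e. $\nu^P_x\to\delta_\xi$ weakly (the point $\xi$ being approachable within $\Xx$ since $\Xx$ is dense in $\wh\Xx$). If some $\xi$ had an open boundary neighbourhood $U$ with $\nu_e(U)=0$, then because $\nu^P_x$ is absolutely continuous with respect to $\nu_e$ we would get $\nu^P_x(U)=0$ for all $x$, contradicting $\nu^P_x(U)\to 1$ as $x\to\xi$. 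Hence $\supp(\nu_e)=\Mbd(\Xx)$; in particular the minimal boundary is dense in $\Mbd(\Xx)$.

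Finally I combine the two facts. At $x=e$ we have $\nu^P_e=\nu^Q_e=\nu_e$, and then $K_P(x,\cdot)\,\nu_e=\nu^P_x=\nu^Q_x=K_Q(x,\cdot)\,\nu_e$ forces $K_P(x,\xi)=K_Q(x,\xi)$ for $\nu_e$-almost every $\xi$, for each fixed $x$. Since $K_P(x,\cdot)$ and $K_Q(x,\cdot)$ are continuous and $\nu_e$ has full support, the set of agreement is dense, so the two continuous kernels agree on all of $\Mbd(\Xx)$; this is exactly analytic coincidence of the compactifications, and equality of the kernels transports the Poisson--Martin representation from $P$ to $Q$ and back, giving $\HH^+(P,1)=\HH^+(Q,1)$. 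The hard part, and the step where all three hypotheses are genuinely needed, is the passage from almost-everywhere to everywhere equality of the kernels: a priori the harmonic measure is carried by the minimal boundary and could miss part of $\Mbd(\Xx)$, so it is the full-support conclusion driven by Dirichlet regularity that closes the gap. A secondary point requiring care is the justification of the two martingale computations, which rest on transience, almost-sure convergence of both chains to the common boundary, and continuity of $u_\varphi$ up to that boundary.
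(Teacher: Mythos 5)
Your proposal is correct, and since the paper does not prove this proposition at all (it is quoted verbatim from \cite{PiWo2}, Prop.~1), there is no in-text proof to compare against; your argument is, in substance, the one given in that reference. The route is the natural one: solve the Dirichlet problem for $P$ with continuous boundary data $\varphi$, identify the solution with $\int\varphi\,d\nu_x^P$ by the bounded-martingale argument, note that it is therefore a positive $P$-harmonic function and hence $Q$-harmonic, run the same martingale argument for $Q$ to conclude $\nu_x^P=\nu_x^Q$, and then upgrade the resulting $\nu_e$-a.e.\ equality $K_P(x,\cdot)=K_Q(x,\cdot)$ (coming from $d\nu_x/d\nu_e=K(x,\cdot)$) to equality everywhere, using continuity of the extended kernels together with the full support of $\nu_e$ --- which is exactly what Dirichlet regularity delivers via $\nu_x^P\to\delta_\xi$. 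All of this is sound under the standing (implicit) assumption that both chains are transient, so that almost sure convergence to the Martin boundary is available for each of them. The one place where you are too quick is the closing clause $\HH^+(P,1)=\HH^+(Q,1)$: the inclusion of $\HH^+(Q,1)$ into $\HH^+(P,1)$ requires knowing that $K_P(\cdot,\xi)$ is genuinely $P$-harmonic (not merely $P$-superharmonic) for every $\xi$ in the $Q$-minimal boundary, and that $P$ may be interchanged with the integral in $h=\int K_Q(\cdot,\xi)\,d\nu=\int K_P(\cdot,\xi)\,d\nu$; both points are automatic when $P$ has finite range --- which is the case in the paper's application, where $P=Q_1$ is a nearest-neighbour Doob transform --- but they deserve an explicit word in the general setting of the statement.
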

 
We shall apply this below.

\section{Trees and free groups}\label{sec:trees}

Let $\T$ be the regular tree where each vertex has $q+1$ neighbours, with 
$q \ge 2$. 

If there is an edge between the vertices $x,y \in \T$ then we write  
$x \sim y$.
For any pair of vertices $x,y$, there is a unique finite geodesic $\pi(x,y)=[x=x_0\,,x_1\,,\dots, x_k=y]$ connecting the two, that is, 
$x_{i-1} \sim x_i$, and all $x_i$ are distinct. The distance between $x$ and $y$ is then
$d(x,y)=k$.
An infinite \emph{ray} is a 
one-sided infinite geodesic path $\pi = [x_0\,,x_1\,,x_2\,,\dots]$, that is, $x_k \sim x_{k-1}$
and all $x_k$ are distinct. Two rays $\pi$ and $\pi'$ are called \emph{equivalent,} if
they differ only by finitely many initial vertices. An \emph{end} of $\T$ is an 
equivalence class of rays. The \emph{geometric boundary} $\partial \T$ is the set of all ends.
For any $x \in \T$ and $\xi \in \partial \T$, there is a unique ray $\pi(x,\xi)$ starting at
$x$ which represents $\xi$. We set $\Cc_{\textrm{ends}}(\T) = \T \cup \partial \T\,$. 
 
We choose and fix a root vertex $e$. For $x \in \T$, we let $|x| = d(e,x)$. We define the \emph{branch} of $\Cc_{\textrm{ends}}(\T)$ at $x$ as
$$
\Cc_{\textrm{ends}}(\T_x) = \{ w \in \Cc_{\textrm{ends}}(\T) : x \in \pi(e,w) \}.
$$
The topology on $\Cc_{\textrm{ends}}(\T)$ is discrete on $\T$, while a neighbourhood basis of
$\xi \in \partial \T$ is given by the collection of all $\Cc_{\textrm{ends}}(\T_x)$, where 
$x \in \pi(e,\xi)$. Each of those sets is open and compact. 
This turns $\Cc_{\textrm{ends}}(\T)$ into a totally disconnected, compact Hausdorff space in which 
$\T$ is open and dense. It can be metricised as follows. For \emph{distinct}
$v, w \in \Cc_{\textrm{ends}}(\T)$, their \emph{confluent} $v \wedge w$ is the vertex
on $\pi(e,v) \cap \pi(e,w)$ furthest from the root $e$. Then
$$
\vartheta(v,w) = \begin{cases} q^{-|v \wedge w|}\,,&\text{if}\; v \ne w,\\ 
                            0\,,&\text{if}\; v = w 
              \end{cases}
$$ 
is an ultrametric which induces the above topology. Below, we shall also need the 
\emph{Busemann function} or \emph{horocycle index} with respect to an end 
$\xi \in \partial \T$. For a vertex $x \in \T$, this is
$$
\hor(x,\xi) = d(x, x \wedge \xi) - |x \wedge \xi| = \lim_{\T \ni y \to \xi}
d(x,y) - |y|.
$$

\medskip

%\goodbreak

\noindent
\textbf{A. Isotropic random walks on $\T$}
\\[3pt]
A Markov chain with transition matrix $P$ on $\T$ is called an \emph{isotropic random walk}, if
$p(x,y)$ depends only on $d(x,y)$. For $d \in \N$, let $P_d$ be the stochastic 
transition matrix with entries
$$
p_d(x,y) = \begin{cases} \dfrac{1}{(q+1)q^{d-1}}\,,&\text{if}\; d(x,y)=d\,,\\[3pt]
                         0\,,&\text{otherwise.} 
           \end{cases}              
$$
Also, we set $P_0 = I$. For each $d$, there is a polynomial $\wh P_d(t)$ of degree $d$ 
such that $P_d = \wh P_d(P_1)$. 
If $P$ is isotropic then it can be written as
a (possibly infinite) convex combination  $P= \sum_{d=0}^{\infty} a_d \,P_d\,$.
In order to guarantee irreducibility \& aperiodicity, we assume that 
$a_d > 0$ for some odd and some even $d$. We now refer to the results 
explained in \cite[\S 19.C]{Wbook}, due to \cite{Sa}.
The \emph{spherical transform} of $P$ is
$$
\wh P(t) = \sum_{d=0}^{\infty} a_d \,\wh P_d(t)\,.
$$
Thus, $P = \wh P(P_1)$.
It is very well known that 
%$\rho(P_1)= 2\sqrt{q}/(q+1)$, and $\rho = \rho(P)$ is given by 
\begin{equation}\label{eq:rhoP}
\rho(P_1)= \frac{2\sqrt{q}}{q+1} \AND \rho= \rho(P) = \wh P\bigl(\rho(P_1)\bigl). 
\end{equation}
Let 
\begin{equation}\label{eq:spherical}   
 \varphi(n) = \Bigl(1+\frac{q-1}{q+1}n\Bigr) q^{-n/2}\,, \quad n \in \N_0\,.
\end{equation}
Set $\Pfi(x,y) = \varphi\bigl(d(x,y)\bigr)$. Thus, $\Pfi(x)=\Pfi(x,e)$ is
the \emph{spherical function} which satisfies $P_1 \Pfi(x) = \rho(P_1)\,\Pfi(x)$.
(``Spherical'' means that it is an eigenfunction of $P_1$ with value $1$ at $e$ 
that depends only on $|x|$.) We see that also $P\Pfi(x,y) = \rho\, \Pfi(x,y)$.
The local limit theorem of \cite{Sa} says that
$$
p^{(n)}(x,y) \sim C\, \Pfi(x,y) \,\rho^n \, n^{-3/2}\,,\quad \text{as }\; n \to \infty\,.
$$
Thus, we get 
$$
H(x,y) = \frac{\Pfi(x,y)}{\Pfi(e,y)}.
$$
\begin{thm}\label{thm:radial} %HIER WEITER MOMENT
For isotropic $P$ as above, suppose that it has super-exponential moments: 
$\limsup_{d\to \infty} a_d^{1/d} = 0$.
Then the ratio limit compactification coincides with the $\rho\,$-Martin
compactification analytically. Geometrically, this is the end compactification, and
for $\xi \in \partial \T$,
$$
K(x,\xi|\rho) = H(x,\xi) = q^{-\hor(x,\xi)/2}.
$$
\end{thm}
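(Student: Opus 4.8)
The plan is to compute both boundary kernels explicitly and match them to $q^{-\hor(x,\xi)/2}$. I start with the ratio limit kernel, which is the easy half. Sawyer's local limit theorem already gives $H(x,y) = \Pfi(x,y)/\Pfi(e,y) = \varphi\bigl(d(x,y)\bigr)/\varphi(|y|)$. Fix an end $\xi$ and let $y \to \xi$ in $\T$. Writing $c = x \wedge \xi$ for the confluent, for $y$ far enough out along the ray $\pi(e,\xi)$ the confluent $x \wedge y$ stabilises at $c$, so that $d(x,y) - |y| = d(x,c) - |c| = \hor(x,\xi)$ is eventually constant. Substituting $d(x,y) = |y| + \hor(x,\xi)$ into the explicit $\varphi$ of \eqref{eq:spherical}, the exponential factor contributes $q^{-\hor(x,\xi)/2}$ while the linear prefactor contributes $\bigl(|y| + \hor(x,\xi)\bigr)\big/|y| \to 1$. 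Hence $H(x,\xi) = q^{-\hor(x,\xi)/2}$.

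For the $\rho\,$-Martin kernel I would first note that the functions $\psi_\xi(x) = q^{-\hor(x,\xi)/2}$ are $\rho\,$-harmonic. Averaging over the $q+1$ neighbours of $x$ (one with horocycle index $\hor(x,\xi) - 1$, the others with $\hor(x,\xi)+1$) gives $P_1\psi_\xi = \gamma_1(1/2)\,\psi_\xi$ with $\gamma_1(s) = (q^{s} + q^{1-s})/(q+1)$, so $\gamma_1(1/2) = 2\sqrt q/(q+1) = \rho(P_1)$; since $P = \wh P(P_1)$ this yields $P\psi_\xi = \wh P\bigl(\rho(P_1)\bigr)\psi_\xi = \rho\,\psi_\xi$ by \eqref{eq:rhoP}. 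Observe that $s = 1/2$ is a \emph{double} root of $\gamma_1(s) = \rho(P_1)$, which is exactly the degeneracy responsible for the linear factor in $\varphi$. To pin the kernel down, I use that by isotropy $F(x,y|1/\rho)$ depends only on $d(x,y)$, so with $d(x,y) = |y| + \hor(x,\xi)$ for $y$ far out it suffices to show that $F(k+h|1/\rho)/F(k|1/\rho) \to q^{-h/2}$ as $k \to \infty$ for each fixed $h$. I would extract this from the spherical-transform description of the Green function in \cite[\S 19.C]{Wbook}: at the critical value $z = 1/\rho$ the first-passage function decays at the exact rate $q^{-k/2}$, up to a subexponential (polynomial) correction that cancels in the ratio, the rate $q^{-1/2} = q^{-s}$ again encoding the root $s = 1/2$. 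This gives $K(x,\xi|\rho) = q^{-\hor(x,\xi)/2} = H(x,\xi)$.

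It then remains to check that these limits describe the full compactifications and not merely individual kernels: that distinct ends yield distinct kernels — choosing $x$ on the ray to $\xi$ beyond the confluent $\xi \wedge \eta$ separates $\xi$ from $\eta$, as for $H$ — and that every sequence $y_n$ with $|y_n| \to \infty$ has all its kernel limits among the $\psi_\xi$, which follows from the tree structure by a standard compactness argument on confluents. Both compactifications are thereby the end compactification $\Cc_{\textrm{ends}}(\T)$ carrying the single kernel $q^{-\hor(x,\xi)/2}$, so they coincide analytically in the sense of \eqref{eq:hom}. I expect the one genuine obstacle to be the first-passage asymptotics at $z = 1/\rho$: because the walk makes jumps, the cut-vertex factorisation $F(x,y|z) = F(x,w|z)\,F(w,y|z)$ fails — a jump may cross a separating vertex $w$ without hitting it — so the decay rate $q^{-k/2}$ of $F(k|1/\rho)$ cannot be read off by an elementary renewal argument and must be drawn from the singularity of the spherical transform at the spectral radius. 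This is precisely where the super-exponential moment hypothesis $\limsup_{d\to\infty} a_d^{1/d} = 0$ enters, making $\wh P$ entire and delivering both the local limit theorem and the controlled $q^{-k/2}$ decay.
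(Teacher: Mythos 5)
Your computation of the ratio-limit half is exactly the paper's: substituting $d(x,y)=|y|+\hor(x,\xi)$ into \eqref{eq:spherical} and letting the linear prefactors cancel gives $H(x,\xi)=q^{-\hor(x,\xi)/2}$, and this identifies $\Rcp(\T)$ geometrically with $\Cc_{\textrm{ends}}(\T)$. The verification that $\psi_\xi=q^{-\hor(\cdot,\xi)/2}$ is $\rho$-harmonic via $P=\wh P(P_1)$ and \eqref{eq:rhoP} is also correct and is implicitly what makes the whole statement consistent.

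The Martin-kernel half is where the proposal does not close. You reduce everything to the claim that $F(k+h\,|1/\rho)/F(k\,|1/\rho)\to q^{-h/2}$ as $k\to\infty$, equivalently to the asymptotics of $G(x,y|1/\rho)$ in $d(x,y)$ at the \emph{critical} argument $z=1/\rho$, and you correctly identify this as the one genuine obstacle --- but you then only assert that it ``can be drawn from the singularity of the spherical transform at the spectral radius.'' That assertion is the entire difficulty. For $P_1$ one has the closed form $G_{P_1}(x,y|1/\rho(P_1))=\frac{2q}{q-1}q^{-d(x,y)/2}$, but for a general isotropic $P$ with unbounded jumps you would need to show, via spherical inversion, that $G_P(\cdot|1/\rho)$ has the same exponential rate $q^{-d/2}$ with at most a subexponential correction that is asymptotically multiplicative --- a nontrivial piece of harmonic analysis on the tree which you have not carried out, and on which your ``standard compactness argument'' for identifying the full Martin boundary also depends (without it, you cannot rule out extra Martin boundary points over a single end). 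The paper avoids this computation entirely by a soft transfer argument: it quotes \cite[Thm.~1.3]{Gou2} for the geometric identification $\Mcpr(\T)=\Cc_{\textrm{ends}}(\T)$ under the super-exponential moment hypothesis, notes that $\HH^+\bigl(P_1,\rho(P_1)\bigr)\subset\HH^+(P,\rho)$ because $P=\wh P(P_1)$, passes to the Doob transforms $Q_1=P_1^{\Pfi}$ and $Q=P^{\Pfi}$, checks Dirichlet regularity of $\partial\T$ for $Q_1$ by showing $G_{Q_1}(x,e|1)\to0$ (using the explicit $G_{P_1}$), and then invokes Proposition \ref{pro:compare} to conclude that the two sets of positive harmonic functions, hence the extended Martin kernels, coincide --- so $K(x,\xi|\rho)$ is read off from the known $P_1$ case. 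If you want to keep your route, you must actually prove the critical first-passage asymptotics; note also that this, not the local limit theorem, is where the super-exponential moment hypothesis would have to do its work in your argument, whereas in the paper it enters only through the citation of Gou\"ezel's theorem.
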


\begin{proof}
It is known from \cite[Thm. 1.3]{Gou2} that under the super-exponential moment condition 
the $\rho\,$-Martin compactification coincides
geometrically with the end compactification.
%, since our random walk is $\rho\,$-transient
%by \cite[Thm. 7.8]{Wbook}. (Earlier references like \cite{De} and \cite{Sa} only 
%consider the $1$-Martin compactification. The more recent work \cite{Gou2} provides 
%more general results.)

From \eqref{eq:spherical}, we compute for any end $\xi \in \partial \T$
$$
\lim_{y \to \xi} H(x,y) = \lim_{y \to \xi} 
\frac{1+\frac{q-1}{q+1}d(x,y)}{1+\frac{q-1}{q+1}|y|}
q^{-d(x,y)/2+|y|/2} = q^{-\hor(x,\xi)/2}.
$$
This shows that the ratio limit compactification coincides geometrically with
the end compactification. Thus, what is left to show is that 
\begin{equation}\label{eq:K}
K(x,y|\rho) = q^{-\hor(x,\xi)/2}.
\end{equation}
For this purpose, we use Proposition \ref{pro:compare}. 
Since $P = \wh P(P_1)$, equation \ref{eq:rhoP} implies that 
$\HH^+\bigl(P_1,\rho(P_1)\bigr) \subset \HH^+(P,\rho)$. The Green function for $P_1$
is very well known, see e.g. \cite[Lemma 1.12]{Wbook}.
In particular,
$$
G_{P_1}\bigl(x,y|1/\rho(P_1)\bigr) = \frac{2q}{q-1} q^{-d(x,y)/2},
$$
so that \eqref{eq:K} holds for $P_1\,$, whose Martin compactification is of
course again the end compactification of $\T$.

The spherical function $\Pfi(x)= \Pfi(x,e)$ is in  
$\HH^+\bigl(P_1,\rho(P_1)\bigr)$. 
We now consider the Doob transforms
$Q_1 = P_1^{\Pfi}$ and $Q = P^{\Pfi}$, that is, their respective matrix elements are
$$
q_1(x,y) = \frac{p_1(x,y)\Pfi(y)}{\rho(P_1)\,\Pfi(x)} \AND 
q(x,y) = \frac{p(x,y)\Pfi(y)}{\rho\,\Pfi(x)}
$$
Then $f \in \HH(P,\rho)$ if and only if $f/\Pfi \in \HH(Q,1)$, and analogously for 
$P_1$ and $Q_1$. This implies that  $\HH\bigl(Q_1,1\bigr) \subset \HH(Q,1)$.
In view of Proposition \ref{pro:compare}, we need to verify that the 1-Martin 
boundary $\partial \T$ is Dirichlet regular for $Q_1$. By {\sc Cartwright et al.}  \cite{CSW}, for this it is necessary
and sufficient that the Green kernel of $Q_1$ vanishes at infinity, that
is,
$$
\lim_{|x| \to \infty} G_{Q_1}(x,e|1) = 0. 
$$
Now, $G_{Q_1}(x,y|1) =  G_{P_1}\bigl(x,y|1/\rho(P_1)\bigr)\Pfi(y)/\Pfi(x)$. Thus,
$$
G_{Q_1}(x,e|1) = \frac{2q}{q-1} \bigg/ \Bigl( 1 + \frac{q-1}{q+1}|x|\Bigr) \to 0\,,
\quad \text{as }\; |x| \to \infty\,. 
$$
This concludes the proof.
\end{proof}

\begin{rmk}\label{rem:iso} It seems likely that the super-exponential moment condition
may be relaxed here. {\sc Cartwright and Sawyer} \cite{CaSa} have 
shown that $\HH^+(P,1) = \HH^+(P_1\,,1)$  
for arbitrary isotropic $P$, as long as it is irreducible. 
However, the $1$-Martin compactification  is the end compactification (and thus coincides analytically with the one of $P_1$) only under additional hypotheses, such as
first moment, i.e., $\sum_d d\,a_d < \infty\,$. Otherwise, the $1$-Martin boundary can contain
further, non-minimal elements. \cite{CaSa} contains no analogous result at the critical value 
$\rho$, where also the Martin boundary might behave more ``critically''. On the other hand, 
isotropic random walks on $\T$ are a rather special case, where more general results may hold.
\end{rmk}

\medskip

\noindent
\textbf{B. Nearest neighbour random walk on free groups}
\\[3pt]
Let $\Ff = \Ff_s$ be the free group on $s$ free generators $a_1\,,\dots, a_s$, and write
$a_{-i} = a_i^{-1}$. Let $I= \{\pm 1, \dots, \pm s \}$ and $S = \{ a_i: i\in I\}$.
Recall that every element $x$ of $\Ff$ can be written as a reduced word over $S$,
\begin{equation}\label{eq:word}
x= a_{i_1} a_{i_2} \cdots a_{i_k}\,,\quad i_l \in I\,,\; i_l \ne -i_{l-1}\,.
\end{equation}
The length $|x|$ of $x$ is $k$, and when $k=0$, this is the empty word $e$, which is 
the group identity. The Cayley graph of $\Ff$ with respect to $S$ is the tree 
$\T = \T_{2s-1}\,$: its vertex set is $\Ff$, and $x,y \in \Ff$ are connected
by an edge whenever $x^{-1}y \in S$. Thus, the natural geometric compactification
$\wh F$ of $\Ff$ is the end compactification of $\T$, with boundary 
$\partial \Ff = \partial \T$.

We now let $\mu$ be a probability measure on $\Ff$ whose support is
$\supp(\mu) = \{ e \} \cup S$. This nearest neighbour case serves as a 
warm-up for the next sub-section; it might be omitted but may be
instructive. The local limit theorem and the involved
$\rho\,$-harmonic function have been studied in detail by {\sc Gerl and Woess}  \cite{GeWo}. One has
\eqref{eq:local} with $\alpha = 3/2$, and $\rho < 1$ by non-amenability of $\Ff$.
We subsume those facts which are needed here. (The notation is slightly modified.)
By group invariance, $G(x,y|z) = G(e,x^{-1}y|z)$.
The function $G(z)= G(x,x|z)$ is solution of an implicit equation, 
which leads to a formula for $\rho$. Set $F_i(z) = F(e,a_i|z)$ for $i \in I$.
If $x \in \Ff$ has the reduced representation \eqref{eq:word} then 
$$
F(e,x|z) = F_{i_1}(z) \cdots F_{i_k}(z).
$$
The ends of the tree $\T$ which is the Cayley graph of $\Ff$ can be written as
infinite words
\begin{equation}\label{eq:infword}
\xi = a_{j_1} a_{j_2} a_{j_3} \cdots\,, \quad j_l \in I\,,\; j_l \ne -j_{l-1}\,,
\end{equation}
and the $n^{\textrm{th}}$ vertex on the geodesic $\pi(e,\xi)$ is 
$x_n= a_{j_1} a_{j_2} \cdots a_{j_n}\,$. For any $t \ge \rho$, the $t$-Martin compactification
is $\partial \T$. This goes back to {\sc Dynkin and Malyutov} \cite{DyMa} 
and {\sc Cartier} \cite{Ca}.

With $\xi$ as above, if $x \in \Ff$ has reduced representation \eqref{eq:word},
then there is a maximal index $m = m(x,\xi) \le k$ such 
that $j_1 = i_1\,,\dots, j_m=i_m$. Then $x_m = x \wedge \xi$ in the above description of
confluents in the geometry of the tree. Then the Martin kernel at $\xi$ is
$$
K(x,\xi|t) = \frac{F_{-i_k}(1/t)F_{-i_{k-1}}(1/t) \cdots F_{-i_{m+1}}(1/t)} 
                  {F_{i_1}(1/t)F_{i_2}(1/t) \cdots F_{i_m}(1/t)}
$$
It is always minimal.
The analysis of \cite{GeWo} yields that in a neighbourhood of the principal singularity 
$r = 1/\rho$, for $z \in \C \setminus [r\,,\,\infty)$, one has Puiseux series expansions 
of the form
\begin{equation}\label{eq:expand}
%\begin{aligned}
G(z) = \alpha_0 - \beta_0 \sqrt{r-z} + \textit{h.o.t.} \AND
F_i(z) = \alpha_i - \beta_i \sqrt{r-z} + \textit{h.o.t.}\,,
%\end{aligned}
\end{equation}
where $\alpha_i, \beta_i > 0$ for $i \in I \cup \{0\}$, and \textit{h.o.t.} stands for
series of ``higher order terms'' of the form $C \cdot (r-z)^q$, where $C$ is a constant and
$q$ is a rational number with $q > 1/2$, and the appearing exponents form a discrete
subset of $\Q$. As a matter of fact, in the present case, $q$ is always an integer
multiple of $1/2$.  From this, one gets with $x$ as in \eqref{eq:word}, $\xi$ as
in \eqref{eq:infword} and $m=m(x,\xi)$ as above that
\begin{equation}\label{eq:Knn}
 K(x,\xi|\rho) = \frac{\alpha_{-i_k}\alpha_{-i_{k-1}} \cdots \alpha_{-i_{m+1}}} 
                  {\alpha_{i_1}\alpha_{i_2} \cdots \alpha_{i_m}}.
\end{equation}
On the other hand, again for $x$ as in \eqref{eq:word}, we get for $z$ near $r$ as 
above the Puiseux series expansion
\begin{equation}\label{eq:alphabeta}
\begin{gathered}
G(e,x|z) = F_{i_1}(z)\cdots F_{i_k}(z)G(z)
= \alpha(x) - \beta(x) \sqrt{r-z} + \textit{h.o.t.}\,,\quad \text{where}\\
\alpha(x) = \alpha_0\, \alpha_{i_1}\alpha_{i_2} \cdots \alpha_{i_k} \AND 
\beta(x) = \alpha(x) \,\gamma(x)\,, \quad \text{with} \quad
\gamma(x) = \frac{\beta_0}{\alpha_0}+
\sum_{l=1}^k \frac{\beta_{i_l}}{\alpha_{i_l}}.
\end{gathered}
\end{equation}

\begin{cor}\label{cor:nn}
For an aperiodic nearest neighbour random walk on the free group $\Ff$,
the ratio limit kernel is $H(x,y) = \beta(x^{-1}y)/\beta(y)$. 
The ratio limit compactification coincides analytically with the
$\rho\,$-Martin compactification, which geometrically is the end compactification $\wh \Ff = \Cc_{\textrm{ends}}(\T)\,$.
\end{cor}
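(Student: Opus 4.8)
The plan is to read both assertions off the Puiseux data in \eqref{eq:expand}--\eqref{eq:alphabeta}; the only genuine content is a cancellation in the limit $y\to\xi$. First I would dispose of the formula for $H$. By the observation following \eqref{eq:local}, the local limit theorem (here with exponent $3/2$) gives $H(x,y)=\beta(x,y)/\beta(e,y)$, where $\beta(x,y)$ is the coefficient of $-\sqrt{r-z}$ in the Puiseux expansion of $G(x,y|z)$; the transfer from this singularity coefficient to the asymptotic constant in \eqref{eq:local} is the standard Darboux argument, and the resulting universal factor cancels in the ratio. Group invariance $G(x,y|z)=G(e,x^{-1}y|z)$ then identifies $\beta(x,y)$ with $\beta(x^{-1}y)$ for $\beta(\cdot)$ as in \eqref{eq:alphabeta}, so that $H(x,y)=\beta(x^{-1}y)/\beta(y)$.

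For the second claim, recall that (following {\sc Dynkin and Malyutov} and {\sc Cartier}, as noted above) the $\rho\,$-Martin compactification is geometrically $\wh\Ff=\Cc_{\textrm{ends}}(\T)$, with boundary kernel \eqref{eq:Knn}. It therefore suffices to prove that, for every $x$ as in \eqref{eq:word} and every end $\xi$ as in \eqref{eq:infword}, $H(x,y)$ tends to the right-hand side of \eqref{eq:Knn} as $y\to\xi$ in the end topology. This single limit settles everything at once: it shows that each $H(x,\cdot)$ extends continuously to $\Cc_{\textrm{ends}}(\T)$, so that the end compactification is bigger than $\Rcp(\Ff)$; since the limiting values are the Martin kernels, which separate distinct ends, $H$ separates ends too, whence the continuous surjection $\Cc_{\textrm{ends}}(\T)\to\Rcp(\Ff)$ is a homeomorphism; and the equality of the boundary kernels is precisely analytic coincidence.

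The computation I would carry out uses the factorisation $\beta=\alpha\cdot\gamma$ of \eqref{eq:alphabeta}. Writing $\xi=a_{j_1}a_{j_2}\cdots$ and taking a vertex $y$ whose common prefix with $\xi$ has length $N>k$, free reduction deletes exactly the first $m=m(x,\xi)$ letters of $y$ against the tail of $x^{-1}$ and, by maximality of $m$, produces no further cancellation. Inserting the resulting reduced word into the product formula for $\alpha$, the long common block and the part of $y$ beyond the confluent cancel between $\alpha(x^{-1}y)$ and $\alpha(y)$, leaving the $N$-independent value $\alpha(x^{-1}y)/\alpha(y)=(\alpha_{-i_k}\cdots\alpha_{-i_{m+1}})/(\alpha_{i_1}\cdots\alpha_{i_m})$, which is precisely \eqref{eq:Knn}. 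For the factor $\gamma$, the difference $\gamma(x^{-1}y)-\gamma(y)$ is a sum of at most $k$ of the positive numbers $\beta_i/\alpha_i$ and hence stays bounded, while $\gamma(y)\ge N\min_i(\beta_i/\alpha_i)\to\infty$; therefore $\gamma(x^{-1}y)/\gamma(y)\to1$. Multiplying the two limits yields $H(x,y)\to K(x,\xi|\rho)$, as required.

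I expect the only delicate point---and the real content of the corollary---to be this vanishing of the $\gamma\,$-contribution. The Martin kernel \eqref{eq:Knn} sees only the products of the leading coefficients $\alpha_i$, whereas $H$ a priori carries the extra factor $\gamma$, through which the first-passage coefficients $\beta_i$ enter; the statement amounts to the fact that $\gamma$, which grows linearly in the word length, washes out of the limiting ratio, leaving exactly the $\alpha_i$-dependence of the Martin kernel. Everything else is routine bookkeeping of reduced words in $\Ff$.
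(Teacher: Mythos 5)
Your proposal is correct and follows essentially the same route as the paper: read $H(x,y)=\beta(x^{-1}y)/\beta(y)$ off the Gerl--Woess local limit theorem, then use the factorisation $\beta=\alpha\cdot\gamma$ from \eqref{eq:alphabeta} so that $\alpha(x^{-1}y)/\alpha(y)$ reproduces the Martin kernel \eqref{eq:Knn} exactly while $\gamma(x^{-1}y)/\gamma(y)\to1$ because the difference is bounded (the paper notes it equals the constant $\gamma(x^{-1}x_m)-\gamma(x_m)$, a difference of two bounded sums rather than a sum of positive terms, but this does not affect the argument) and $\gamma(y)\to\infty$. No substantive difference from the paper's proof.
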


\begin{proof} The expansion \eqref{eq:alphabeta} yields the following local limit theorem,
see \cite[Thm. 2]{GeWo}:
$$
p^{(n)}(e,x) \sim \frac{1}{2\sqrt{\rho\,\pi}}\, \beta(x)\, \rho^n\, n^{-3/2}\,. 
$$
This yields the stated form of the ratio limit kernel.
We need to show that for every $x \in \Ff$  and every end $\xi$, 
$$
\lim_{y \to \xi} \frac{\beta(x^{-1}y)}{\beta(y)} = K(x,\xi|\rho).
$$
Again, suppose that $x$ is as in \eqref{eq:word}. Let $m = m(x,\xi) \in \{0, \dots, k\}$. 
We now write the reduced representation of $y$ as
$y = a_{j_1} a_{j_2} \cdots a_{j_n}\,$. In principle, the indices $j_1\,,\dots, j_n$
vary with $y$, but $y \to \xi$ means that $\bar m = m(y,\xi) \to \infty$, so that the initial
piece $a_{j_1} a_{j_2} \cdots a_{j_{\bar m}}$ coincides with the initial word of
$\xi$ of the same length. Furthermore, we will have $\bar m \ge m(x,\xi)$ when $y$
is close to $\xi$ in the geometric compactification $\Cc_{\textrm{ends}}(\T)$. 
For such $y$, recalling that $x_m$ is the $m^{\textrm{th}}$ element on $\pi(e,x)$,
$$
x^{-1}y = 
\underbrace{a_{-i_k}a_{-i_{k-1}}\cdots a_{-i_{m+1}}}_{\dps x^{-1}x_m} 
\underbrace{a_{j_{m+1}}a_{j_{m+2}} \cdots a_{j_n}}_{\dps x_m^{-1}y}\,.
$$
Note that by \eqref{eq:Knn} we then have $\alpha(x^{-1}y)/\alpha(y) = K(x,\xi|\rho)$. Therefore, 
when $\bar m \ge m$,
$$
H(x,y) = K(x,\xi|\rho) \, \frac{\gamma(x^{-1}y)}{\gamma(y)}.
$$
We then get $\gamma(x^{-1}y) - \gamma(y) = \gamma(x^{-1}x_m) - \gamma(x_m)\,$,
while $\gamma(y) \to \infty\,$. Thus, 
$$
\frac{\gamma(x^{-1}y)}{\gamma(y)} \to 1\,,\quad \text{as }\; y \to \xi.
$$
This concludes the proof.
\end{proof}

\medskip

\noindent
\textbf{C. Bounded range random walk on free groups}
\\[3pt]
The result of this sub-section generalises the previous one. 
\begin{thm}\label{thm:finrange} %HIER WEITER MOMENT
Suppose that the probability measure $\mu$ on $\Ff$ has finite support $S$ which generates
$\Ff$ as a semi-group and contains the group identity. 
Then the ratio limit compactification coincides with $\rho\,$-Martin
compactification analytically. Geometrically, it is the end compactification.
\end{thm}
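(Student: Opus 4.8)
The plan is to follow the strategy of Corollary \ref{cor:nn}, replacing the scalar first-passage functions $F_i(z)$ by the finite-dimensional transfer structure that governs bounded range walks on a tree. First I would recall the analytic input of {\sc Lalley} \cite{La} (building on {\sc Derriennic} \cite{De}): because $\supp(\mu)$ has bounded radius $R$, every trajectory from the root into the cone beyond a vertex must cross a collar of bounded size, and the first-passage behaviour through such collars is encoded by matrix-valued generating functions $\mathsf{F}_i(z)$, $i \in I$, whose size is governed by $R$ and whose entries are power series with non-negative coefficients. The analogue of the scalar identity $F(e,x|z)=F_{i_1}(z)\cdots F_{i_k}(z)$ is then a matrix product $\mathsf{F}_{i_1}(z)\cdots \mathsf{F}_{i_k}(z)$ taken along the geodesic \eqref{eq:word}. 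Lalley's analysis shows that at the principal singularity $r=1/\rho$ each entry admits a Puiseux expansion of the form \eqref{eq:expand} with a genuine square-root branch point, which is exactly what produces the exponent $\alpha=3/2$ in the local limit theorem; geometrically it also gives that the $\rho\,$-Martin compactification is the end compactification $\Cc_{\textrm{ends}}(\T)$, so that only analytic coincidence remains to be proved.

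Given this, the two kernels can be read off from the expansion. The $\rho\,$-Martin kernel $K(x,\xi|\rho)$ is determined by the \emph{leading} coefficients: along the geodesic from $x$ to $\xi$ the matrix product is dominated by its Perron direction, and taking the ratio at the confluent $x\wedge\xi$ yields a formula that generalises \eqref{eq:Knn}, namely a product of leading coefficients $\alpha_{-i}$ over the part of $\pi(e,x)$ beyond the confluent divided by the product of $\alpha_{i}$ up to the confluent. On the other hand, the local limit theorem gives $p^{(n)}(e,x)\sim C\,\beta(x)\,\rho^n\, n^{-3/2}$, where $\beta(x)$ is the coefficient of $\sqrt{r-z}$ (with the sign as in \eqref{eq:alphabeta}) in the expansion of $G(e,x|z)$; exactly as after \eqref{eq:alphabeta} this gives $H(x,y)=\beta(x^{-1}y)/\beta(y)$, so it suffices to prove $\lim_{y\to\xi}\beta(x^{-1}y)/\beta(y)=K(x,\xi|\rho)$.

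The decisive step is to establish, in the matrix setting, the factorisation $\beta(x)=\alpha(x)\,\gamma(x)$ of \eqref{eq:alphabeta}, with $\alpha(x)$ the leading (multiplicative) coefficient and $\gamma(x)$ an additive correction. Differentiating the matrix product at the branch point, the $\sqrt{r-z}$-coefficient is obtained by summing, over the factors $\mathsf{F}_{i_l}$ along the geodesic, the square-root part of each factor sandwiched between the Perron projections of the remaining product. Since the normalised transfer matrices have Perron eigenvalue one at criticality, these sandwiched contributions converge to per-step constants, so that $\gamma(x)$ grows linearly in $|x|$ with well-defined increments depending only on the last generator. Writing $y=a_{j_1}\cdots a_{j_n}$ and $m=m(x,\xi)$ as in the proof of Corollary \ref{cor:nn}, one gets $\alpha(x^{-1}y)/\alpha(y)=K(x,\xi|\rho)$ for $y$ close enough to $\xi$, while the additive correction telescopes: $\gamma(x^{-1}y)-\gamma(y)$ stabilises to $\gamma(x^{-1}x_m)-\gamma(x_m)$, a quantity independent of $y$, whereas $\gamma(y)\to\infty$. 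Hence $\gamma(x^{-1}y)/\gamma(y)\to 1$ and $H(x,y)\to K(x,\xi|\rho)$, giving analytic coincidence. The main obstacle is precisely this control of the subleading term: one must show that the square-root coefficient of a long matrix product decomposes additively with convergent increments, which in the scalar case of Corollary \ref{cor:nn} was the transparent identity $\gamma(x)=\tfrac{\beta_0}{\alpha_0}+\sum_l \tfrac{\beta_{i_l}}{\alpha_{i_l}}$, but now requires the Perron--Frobenius spectral analysis of the transfer matrices at criticality together with a uniform remainder estimate along geodesics.
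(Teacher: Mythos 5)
Your overall architecture matches the paper's: Lalley's Puiseux expansions, $H(x,y)=\beta(x,y)/\beta(e,y)$, a transfer-matrix factorisation of the Green function along the geodesic, the leading coefficients producing the Martin kernel and the $\sqrt{r-z}$-coefficient producing $\beta$, with the subleading part washing out because it is ``additive in the length'' while the leading part is multiplicative. But the way you propose to carry out the decisive step does not work as stated, and this is exactly where the real content of the proof lies.

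First, there is no exact factorisation $\beta(x)=\alpha(x)\gamma(x)$ with $\gamma$ accruing increments ``depending only on the last generator''. Differentiating the matrix product gives $\beta$ as a sum of about $n$ terms, the $i$-th being the product with the $i$-th factor $\Fb(w_i|r)$ replaced by its square-root coefficient $\Bb(w_i)$ (this is \eqref{eq:beta}); the size and direction of each such term depend on the partial products on \emph{both} sides of the replaced factor, not on a single letter. Likewise the Martin kernel is not a ratio of products of scalar leading coefficients generalising \eqref{eq:Knn}: it is the ratio of inner products $\bigl\langle\fb(x,u_k|r),\wb_{k,\infty}\bigr\rangle\big/\bigl\langle\fb(e,u_k|r),\wb_{k,\infty}\bigr\rangle$, where $\wb_{k,\infty}=\wb_{k,\infty}(\xi)$ is the limit direction of the inhomogeneous product of transfer matrices along $\pi(e,\xi)$ --- an object attached to the end, not to individual letters (Corollary \ref{cor:Martin}). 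Second, ``Perron--Frobenius spectral analysis of the transfer matrices at criticality'' is not the right tool: the matrices $\Fb(w|r)$ vary with $w$ and have no common Perron eigenvector. What replaces it is the uniform contraction of the projective action of these matrices on the standard simplex (Proposition \ref{pro:convdir}); to make that contraction uniform one must group the generators into blocks $w\in W_D$ of length $D=N+2R+1$, so that every non-zero column of $\Fb(w|r)$ has entries bounded below --- single-letter transfer matrices need not have this property. With that in hand, the correct substitute for your telescoping identity is asymptotic, not exact: after projecting each of the terms of \eqref{eq:beta} onto the simplex, all terms with index $i>m(n)$ (where $m(n)\to\infty$, $m(n)/n\to 0$) converge to the \emph{same} limit $\bigl\langle\fb(x,u_k|r),\wb_{k+1,\infty}\bigr\rangle$, the remaining $o(n)$ terms are negligible after dividing by $n$, and taking the quotient with the analogous expression for $e$ gives $H(x,y)\to K(x,\xi|\rho)$. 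So the gap is concrete: you need the normalisation by the word length and the uniform projective contraction in place of the claimed per-letter additive decomposition, which is false in the matrix setting.
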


This needs some preparation. 
From {\sc Lalley} \cite{La}, it is known that the random walk satisfies again a local limit theorem
\eqref{eq:local} with $\alpha = 3/2$, and $\rho < 1$ by non-amenability of $\Ff$.
We shall use a mix of the methods of \cite{De} and its extensions  
by {\sc Picardello and Woess} \cite{PiWo1}, and of \cite{La}, 
compare with \cite[\S 19.B and \S 26.A]{Wbook}.

For $n \in \N$, let let $B_n = \{ x : |x| \le n\}$ be the ball of radius
$n$ around the identity (root) with respect to the metric of the tree $\T$ 
which is the Cayley graph of the group.  
For any $y \in \Ff$, the set $yB_n$ is the ball
of radius $n$ centred at $y$.
Let $R = \max \{ |x| : x \in S \}$, and let $B = B_R\,$. For any set 
$A \subset \Ff$, we consider the stopping time plus associated 
probability generating function
$$
\st^{A} = \inf \{ n \ge 0 : X_n \in yA \} \AND 
F^{A}(u,v|z) = \sum_{n=0}^{\infty} \Prob[\st^{A} = n\,,\; X_n = v | X_0=u]\,z^n\,,
$$
where $u,v \in \Ff$. 
For the simple proof of the following, see the above references.

\begin{lem}\label{lem:intermed}
 If $x_0\,, x_1 \in \Ff$ are distinct and $y \in \pi(x_0\, ,x_1)$ then the random 
 walk starting at $x_0$ must pass through $yB$ in order to reach $x_1\,$. Thus,
 $$
 F(x_0\,,x_1| z)  = \sum_{v \in yB} F^{yB}(x_0\,,v|z)F(v,x_1|z).
 $$
\end{lem}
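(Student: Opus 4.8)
The plan is to prove this by a first-passage/last-exit decomposition based on the tree geometry, exploiting the fact that $y$ lies on the geodesic $\pi(x_0,x_1)$. First I would establish the key geometric/topological observation: since $\Ff$ with generating set $S$ has Cayley graph the tree $\T=\T_{2s-1}$, removing the ball $yB = yB_R$ disconnects $\T$ into pieces, and crucially the vertices $x_0$ and $x_1$ lie in \emph{different} components of $\T \setminus yB$ whenever $y$ is an interior vertex of the geodesic $\pi(x_0,x_1)$ and $x_0,x_1$ are sufficiently far from $y$. The point is that every path in the Cayley graph between $x_0$ and $x_1$ passes through the geodesic's vertex $y$, hence through $yB$; but since the random walk has range $R$ (steps of tree-length at most $R$, as $R=\max\{|x|:x\in S\}$), a single step cannot jump over the entire ``slab'' $yB$ of radius $R$ around $y$. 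I would make this precise by noting that any trajectory of the walk from $x_0$ to $x_1$, viewed as a sequence of vertices each at tree-distance $\le R$ from its predecessor, must at some time have its position enter $yB$, because to cross from the $x_0$-side of $y$ to the $x_1$-side the walk would otherwise have to step across a gap of tree-distance exceeding $R$, which is impossible.

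The main formal step is then the first-passage decomposition. Define $\st^{yB} = \inf\{n \ge 0 : X_n \in yB\}$ as in the statement. By the geometric observation, under $\Prob_{x_0}$ the event that the walk ever reaches $x_1$ is contained in the event $\{\st^{yB} < \infty\}$, so I may condition on the first entrance time and location. Applying the strong Markov property at the stopping time $\st^{yB}$, and summing over the (finitely many) possible entrance vertices $v \in yB$, I would write
$$
F(x_0,x_1|z) = \sum_{v \in yB} \sum_{n=0}^{\infty} \sum_{m=0}^{\infty}
\Prob[\st^{yB}=n,\,X_n=v\mid X_0=x_0]\,z^n \cdot f^{(m)}(v,x_1)\,z^m.
$$
Here I use that, conditioned on $\{\st^{yB}=n,\,X_n=v\}$, the future increments are independent of the past and distributed as a walk started at $v$; the inner sum over $m$ of $f^{(m)}(v,x_1)z^m$ is exactly $F(v,x_1|z)$, while the sum over $n$ of the first factor is $F^{yB}(x_0,v|z)$. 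Factoring the convolution yields
$$
F(x_0,x_1|z) = \sum_{v \in yB} F^{yB}(x_0,v|z)\,F(v,x_1|z),
$$
which is the claimed identity.

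The step I expect to be the main obstacle is the geometric separation claim, namely verifying rigorously that the walk cannot reach $x_1$ from $x_0$ without first hitting $yB$, when $x_0$ and $x_1$ are generic (not necessarily far from $y$). If $x_0$ or $x_1$ itself lies inside $yB$, the statement still holds trivially since then $\st^{yB}=0$ or the target is in the ball, but the clean ``must pass through'' phrasing presumes $y$ is genuinely between them. The essential input is that $R = \max\{|x| : x \in S\}$ bounds the tree-length of each step, so the walk's trajectory is a connected lattice path in $\T$ whose consecutive vertices differ by at most $R$; since any connected path in the tree from the $x_0$-side to the $x_1$-side of $y$ must traverse every vertex of the segment of $\pi(x_0,x_1)$ near $y$, and the ball $yB$ has radius exactly $R$ about $y$, no single step can leap from outside $yB$ on one side to outside $yB$ on the other. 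Once this is granted, everything reduces to the standard strong-Markov first-passage factorization, and the remaining manipulation is routine.
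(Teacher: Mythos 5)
Your proof is correct and is exactly the standard argument the paper points to in its references: interpolating the trajectory by geodesics shows every path from $x_0$ to $x_1$ in the tree must cross $y$, a step of length $\le R$ cannot do so from outside $yB$, and the strong Markov property at $\st^{yB}$ then gives the convolution identity. The geometric step you flag as a possible obstacle is in fact immediate (if $y\in\pi(X_n,X_{n+1})$ then $d(X_n,y)\le d(X_n,X_{n+1})\le R$, so $X_n\in yB$), and the degenerate cases you mention are handled trivially, so there is no gap.
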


Next, for $A \subset \Ff$ as above, let 
$$
F_A(u,u'|z) = \sum_{n=0}^{\infty} \Prob[\st^{\Ff \setminus A} > n\,,\; X_n = u' | X_0=u]\,z^n\,,
$$
where $u,u' \in A$.  There is $N \ge R$ such that 
$$
F_{B_N}(u,u'|z) > 0 \quad \text{for all }\; u,u' \in B=B_R \;\text{ and all }\; 
z \in (0\,,\,r]\,,
$$
where, as above, $r =1/\rho$. (This is a simple observation: 
there must be a sequence $u=u_0\,,u_1\,\dots ,u_k = v$ such that $p(u_{i-1}\,,u_i) > 0$ for 
all $i$. We take $N$ large enough such that for any choice of $u, v \in B$ there is
such a sequence which is entirely contained in $B_N$.)

For  $x, y \in \Ff$,
we define the square matrix, resp.  (column) vectors
\begin{equation}\label{eq:Fb}
\begin{gathered}
\Fb(x,y|z) = \bigl( F^{yB}(xu,yv|z) \bigr)_{u, v \in B}\,,\\ 
\fb(x,y|z) = \bigl(F^{yB}(x,yu|z) \bigr)_{u \in B} \AND
\gb(x,y|z) = \bigl(G(xu,y|z)\bigr)_{v \in B}.
\end{gathered}
\end{equation}
We now let $D =  N+2R+1$ and consider the set $W_D= \{w \in \Ff : |w| = D\}$
of all elements (words) in $\Ff$ with length $D$. We observe that when
$d(x,y) = D$ then $w = x^{-1}y \in W_D$ and $\Fb(x,y|z) = \Fb(e,w|z)=:\Fb(w|z)$. 
Then the following is a consequence of Lemma \ref{lem:intermed}, see \cite{De}, \cite{PiWo1}.

\begin{lem}\label{lem:prod}
 Let $x, y$ in $\Ff$ and $u_0\,,u_1\,,\dots, u_n \in \pi(x,y)$
 such that $d(u_0\,,x) >R$, $d(u_n, y) > R$ and $d(u_k\,, x) = d(u_0\,,x) + k\,D\,$,
 so that $w_k = u_{k-1}^{-1}u_k \in W_D$ for $k=1, \dots, n$. Then for $z  \in (0\,,\,r]$ with
 $r=1/\rho$,
 $$
 G(x,y|z) = \bigl\langle \fb(x, u_0|z)\,,\, 
 \underbrace{\Fb(u_0\,,u_1|z) \cdots \Fb(u_{n-1}\,,u_n|z)}_{\dps 
 \Fb(w_1|z) \cdots \Fb(w_n|z)}\, \gb(u_n,y|z)\bigr\rangle.
$$
\end{lem}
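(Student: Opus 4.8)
The statement is a factorization of the Green function as an inner product of a first-passage vector, a product of transfer matrices, and a final Green-function vector, indexed by the balls $yB$ straddling points along the geodesic $\pi(x,y)$. The natural approach is to iterate Lemma \ref{lem:intermed}: each intermediate point $u_k$ lies on $\pi(x,y)$, so any path from $x$ to $y$ is forced to pass through $u_kB$, and the first-passage decomposition at successive cut-sets turns a single Green function into a chain of matrix products.

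\textbf{First steps.} I would set up the decomposition one cut-set at a time. Applying Lemma \ref{lem:intermed} with the cut point $u_0$ (using $d(u_0,x)>R$ so that the ball $u_0B$ genuinely separates $x$ from the rest of the geodesic), I get
$$
G(x,y|z) = \sum_{v_0 \in u_0B} F^{u_0B}(x,v_0|z)\,G(v_0,y|z).
$$
In vector notation this is $\langle \fb(x,u_0|z),\, \gb(u_0,y|z)\rangle$, except that the right factor is still a full Green function to $y$ rather than a terminal vector; the idea is to keep peeling. For the next point $u_1$, every path from a state $v_0 \in u_0B$ to $y$ must cross $u_1B$ (since $u_1 \in \pi(u_0,y)$ once $d(u_k,x)=d(u_0,x)+kD$ places the balls in increasing order along the geodesic, and $D=N+2R+1$ is large enough that consecutive balls $u_{k-1}B$ and $u_kB$ are disjoint and correctly ordered). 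Decomposing $G(v_0,y|z)$ at $u_1B$ yields the matrix entry $F^{u_1B}(v_0,v_1|z)$ summed against $G(v_1,y|z)$; collecting over $v_0,v_1 \in B$ identifies the matrix $\Fb(u_0,u_1|z)$. Iterating this from $u_1$ through $u_n$ produces the telescoping product $\Fb(u_0,u_1|z)\cdots\Fb(u_{n-1},u_n|z)$, and the final decomposition at $u_n$ (using $d(u_n,y)>R$) leaves the terminal vector $\gb(u_n,y|z)$.

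\textbf{The translation-invariance reduction.} The reindexing $\Fb(u_{k-1},u_k|z)=\Fb(w_k|z)$ with $w_k=u_{k-1}^{-1}u_k \in W_D$ follows from group invariance of first-passage generating functions, $F^{yB}(xu,yv|z)=F^{u^{-1}(x^{-1}y)^{-1}\cdot\,\cdots}$, together with $d(u_{k-1},u_k)=D$ forcing $w_k$ into $W_D$; this is exactly the observation recorded just before Lemma \ref{lem:prod}, so I would invoke it rather than re-derive it. Positivity and finiteness of all entries for $z\in(0,r]$ is guaranteed because $F(v,y|1/\rho)$ is finite by \cite[Lemma 3.66]{WMarkov} and first-passage functions are dominated by Green functions, so the inner product converges and the manipulations are legitimate.

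\textbf{Main obstacle.} The genuine content is the \emph{geometric separation bookkeeping}: one must verify that at each stage the ball $u_kB$ really is a cut-set separating the remaining source from $y$, i.e. that every point of the preceding ball $u_{k-1}B$ lies on the $x$-side of $u_kB$ along the tree. This is where the spacing $d(u_k,x)=d(u_0,x)+kD$ with $D=N+2R+1$ is essential: since $B=B_R$ has radius $R$ and the support of $\mu$ has range $R$, consecutive decomposition balls are separated by a buffer of width exceeding $2R$, so the first-passage through $u_kB$ from any $v_{k-1}\in u_{k-1}B$ is well-defined and the chain cannot ``skip'' a ball. Making this separation argument precise — confirming that Lemma \ref{lem:intermed} applies at every step with the correct source ball and that no crossing is double-counted — is the part requiring care; once it is in place, the factorization is a mechanical telescoping of the single-step identity.
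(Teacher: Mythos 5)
Your proposal is correct and is essentially the paper's own argument: the paper states the lemma as "a consequence of Lemma \ref{lem:intermed}" (citing \cite{De}, \cite{PiWo1}), i.e.\ precisely the iterated first-passage decomposition at the successive cut balls $u_kB$ that you carry out, with group invariance giving $\Fb(u_{k-1},u_k|z)=\Fb(w_k|z)$. Your identification of the spacing $D=N+2R+1$ and the tree separation as the only point needing care is also accurate.
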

Here, $\Fb(w_1|z) \cdots \Fb(w_n|z) \gb(u_n,y|z)$ is the product of $n$ square matrices
applied to the column  vector $\gb(u_n,y|z)$, and $\langle\cdot,\cdot\rangle$ is the ordinary inner product of column vectors indexed by $B$. 
%The formula holds for
%all complex $z$ with $|z| \le r$ when using the complex conjugate in the second %variable of the inner product. 
Now %let $0 < z \le r$ and 
set 
$$
\lambda_{z} = \min \{ F_{B_N}(u,u'|z) : u,u' \in B \} 
$$
Then 
$$
F^{wB}(u,wv|z) \ge F_{B_N}(u,u'|z)F^{wB}(u',wv|z) \ge \lambda_{z} \quad 
\text{for all }\; u, u', v \in B
\;\text{ and }\; z \in (0\,,\,r].
$$
In words, the first of the two inequalities comes from the fact that 
the random walk starting at $u \in B$ can reach $u' \in B$ with positive 
probability \emph{before} entering $wB$ at $wv$. In potential theoretic terms,
this can be interpreted as ``balayage'' or as a Harnack inequality.
What is important for us is that it tells us that all the matrices 
$\Fb(w|z)$, $w \in W_D$, $z \in (0\,,\,r]$, have their zeros disposed in columns,
and that in each non-zero column, the ratio of any two entries is bounded below 
by $\lambda_{z}$ when $z \in (0\,,\,r]$. 

For any vector $\vb \in (0\,,\,\infty)^B$, let 
$$
\Proj\,\vb = \frac{1}{\langle \vb, \uno\rangle} \,\vb
$$
be its projection onto the standard simplex over $B$ (all non-negative vectors whose coordinates sum up to $1$).
Then the above yields the following, see
\cite{De}, \cite{PiWo1} or \cite[\S 26.A]{Wbook}.

\begin{pro} \label{pro:convdir} Let $z \in (0\,,\,r]$ and $\underline{w}= (w_n)_{n \in \N}$
 be a sequence in $W_D\,$. Then there is a vector 
 $\wb_{\infty} = \wb_{\infty}(\underline{w},z) \in (0\,,\,\infty)^B$ with 
 $\langle \wb_{\infty}\,,\uno\rangle = 1$ such that uniformly for any 
 sequence of non-zero vectors  $\ab_n \in [0\,,\,\infty)^B$,
 $$
 \lim_{n \to \infty} \Proj\, \Fb(w_1|z) \cdots \Fb(w_n|z)\,\ab_n = \wb_{\infty}.
 $$
\end{pro}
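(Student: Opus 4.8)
The plan is to run a Hilbert-metric contraction (Birkhoff) argument on the products $M_n = \Fb(w_1|z)\cdots\Fb(w_n|z)$, exploiting the two structural features recorded above: the zeros of each $\Fb(w|z)$ occupy full columns, and in every non-zero column the ratio of any two entries lies in $[\lambda_z\,,\,1/\lambda_z]$, uniformly for $w \in W_D$ and the fixed $z \in (0\,,\,r]$. First I would record what this gives after a single application. If $\ab \in [0\,,\,\infty)^B$ is non-zero and $\Fb(w|z)\ab \neq \zero$, then $\Fb(w|z)\ab$ is a non-negative combination of the non-zero columns, each of which is strictly positive with entries comparable up to the factor $1/\lambda_z$; hence $\Fb(w|z)\ab$ is itself strictly positive with entries comparable up to $1/\lambda_z$. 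Writing
$$
\Sigma_z = \bigl\{\vb \in (0\,,\,\infty)^B : \langle \vb, \uno\rangle = 1,\ \lambda_z \le v_i/v_j \le 1/\lambda_z \ \text{for all}\ i,j\bigr\},
$$
every such image satisfies $\Proj\,\Fb(w|z)\ab \in \Sigma_z$, a compact subset of the open simplex. I would also note that a strictly positive vector is never annihilated, since each $\Fb(w|z)$ has a non-zero column; so once we have passed through one matrix all subsequent products stay strictly positive. (In the application of Lemma \ref{lem:prod} the innermost vector is the strictly positive $\gb(u_n,y|z)$, so all products occurring there are non-zero and $\Proj$ is well defined; in general one restricts to vectors not annihilated by the first matrix.)

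Next I would introduce the Hilbert projective metric $d_{\mathrm H}(\vb,\vb') = \log \max_{i,j} \dfrac{v_i\,v_j'}{v_j\,v_i'}$ on $(0\,,\,\infty)^B$, which descends to a genuine metric on the open simplex inducing its Euclidean topology and with respect to which $\Proj$ is the canonical projection. Two facts drive the argument. On one hand, the structure of $\Sigma_z$ forces $d_{\mathrm H}(\vb,\vb') \le 2\log(1/\lambda_z) =: \Delta_z < \infty$ for all $\vb,\vb' \in \Sigma_z$. On the other hand, by Birkhoff's contraction theorem a non-negative matrix contracts $d_{\mathrm H}$ with ratio $\tanh(\Delta/4)$, where $\Delta$ is the projective diameter of the image of the cone; since that image is contained in $\Sigma_z$ for each $\Fb(w|z)$, every $\Fb(w|z)$ contracts $d_{\mathrm H}$ by the \emph{uniform} factor $\kappa_z := \tanh(\Delta_z/4) < 1$, for all $w \in W_D$. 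This uniformity over $W_D$ is the crucial point and the only place where the explicit lower bound $\lambda_z$ is needed.

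Finally I would assemble the conclusion. Applying the contraction through the $n-1$ outer matrices, for any two non-zero sequences $\ab_n, \ab_n'$ (with non-zero products) I obtain
$$
d_{\mathrm H}\bigl(\Proj M_n \ab_n, \Proj M_n \ab_n'\bigr) \le \kappa_z^{\,n-1}\, d_{\mathrm H}\bigl(\Fb(w_n|z)\ab_n, \Fb(w_n|z)\ab_n'\bigr) \le \kappa_z^{\,n-1}\,\Delta_z \to 0,
$$
uniformly in the choice of the vectors, where I have used that $\Fb(w_n|z)\ab_n, \Fb(w_n|z)\ab_n' \in \Sigma_z$. Thus the limit, once it is shown to exist, must be independent of and uniform over the $\ab_n$. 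To produce the limit I fix a single $\ab$ and use $M_{n+1}\ab = M_n\bigl(\Fb(w_{n+1}|z)\ab\bigr)$; the same estimate then yields $d_{\mathrm H}(\Proj M_n \ab, \Proj M_{n+1}\ab) \le \kappa_z^{\,n-1}\Delta_z$, a summable bound, so $(\Proj M_n \ab)_n$ is $d_{\mathrm H}$-Cauchy inside the compact set $\Sigma_z$ and converges to some $\wb_{\infty} = \wb_{\infty}(\underline{w}, z)$ with $\langle \wb_{\infty}, \uno\rangle = 1$. Combined with the uniform estimate this is exactly the assertion.

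The step I expect to be the main obstacle is that the matrices $\Fb(w|z)$ are not strictly positive, so Birkhoff's theorem does not apply to them verbatim as self-maps of the open cone; the remedy, and the point that must be argued carefully, is that a single application already deposits every admissible vector into the finite-diameter set $\Sigma_z$, after which the uniform contraction $\kappa_z < 1$ takes over and, crucially, does so with a factor that does not depend on which words $w \in W_D$ occur.
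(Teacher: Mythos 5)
Your proof is correct and is essentially a rigorous implementation of the paper's own (only sketched) argument: the paper justifies the proposition with the single remark that each map $\ab \mapsto \Proj\,\Fb(w|z)\,\ab$ is a uniform contraction of the simplex into its interior, deferring the details to \cite{De}, \cite{PiWo1} and \cite[\S 26.A]{Wbook}, and your Birkhoff/Hilbert-metric argument with the uniform one-step diameter bound $2\log(1/\lambda_z)$ and contraction factor $\tanh$ of a quarter of that is precisely the standard way this is made precise. Your explicit treatment of the vectors annihilated by the zero columns (restricting to $\ab_n$ not killed by the first matrix, which is automatic for the strictly positive vectors $\gb(u_n,y|z)$ arising in the application) addresses a point that the paper's one-line justification silently glosses over.
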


The reason is that for each $w\in W_D\,$, the mapping
$\ab \mapsto \Proj\, \Fb(w|z)\,\ab$ is a contraction of the standard simplex
over $B$ with Lipschitz constant $\ell(\lambda_z) < 1$ that maps the simplex into
its interior.

It is known from the cited references that the $\lambda$-Martin compactification for $\lambda \ge \rho$ is always the end compactification of the tree, and that each end is a minimal
boundary element. For our purpose, we need the above material in
order to identify the Martin kernels for $\lambda = \rho$, i.e., $z=r$, as follows. Let $\xi \in \partial \T$,
and let $\pi(e,\xi) = [e=x_0\,,x_1\,, x_2\,,\dots]$. Let $u_n =  x_{nD}$ and, with the group operation of $\Ff$,  $w_n = u_{n-1}^{-1}u_{n}\,$. 
 We obtain a sequence 
$\underline{w} = \underline{w}(\xi)$ in $W_D$. Following Proposition \ref{pro:convdir},
we let for $k \in \N$
$$
\wb_{k,\infty} =  \wb_{k,\infty}(\xi) = \lim_{n \to \infty} 
\Proj\, \Fb(w_{k+1}|r) \cdots \Fb(w_n|r)\,\ab_n\,,
$$
which is independent of the specific choice of the positive vectors $\ab_n\,$.
Using Lemma \ref{lem:prod} and Proposition \ref{pro:convdir}, we now obtain
the following.

\begin{cor}\label{cor:Martin}
With the foregoing notation, in particular 
$\pi(e,\xi) = [e=x_0\,,x_1\,,\dots]$ for $\xi \in \partial \T$, 
let $x \in \Ff \equiv \T$, and let $k$ be 
such that $u_k=x_{kD} \in \pi(x \wedge \xi, \xi)$ and $d(u_{k}, x) > R$.
Then 
$$
\lim_{\T \ni y \to \xi} K(x,y|\rho) = 
\frac{\bigl\langle\fb(x, u_{k}|r), \wb_{k,\infty}\bigr\rangle}
     {\bigl\langle\fb(e, u_{k}|r), \wb_{k,\infty}\bigr\rangle} = K(x,\xi|\rho).
$$
\end{cor}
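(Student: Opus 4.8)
The plan is to turn the Martin kernel into a ratio of Green functions and then feed it into the matrix machinery of Lemma \ref{lem:prod} and Proposition \ref{pro:convdir}. The first step is the algebraic simplification
$$
K(x,y|\rho) = \frac{F(x,y|r)}{F(e,y|r)} = \frac{G(x,y|r)}{G(e,y|r)},
$$
which is valid because $F(\cdot,y|z) = G(\cdot,y|z)/G(y,y|z)$, so that the factor $G(y,y|r)$ cancels. This is what makes the generating-function identities directly usable, since those are stated for $G$ rather than $F$.

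The second step is geometric. For $y$ close to $\xi$ in $\Cc_{\textrm{ends}}(\T)$, both geodesics $\pi(x,y)$ and $\pi(e,y)$ merge with the ray $\pi(e,\xi)$ before reaching $u_k$ — for $\pi(x,y)$ this uses the hypothesis $u_k \in \pi(x\wedge\xi,\xi)$ — and they share the common tail through $u_k, u_{k+1}, \dots, u_m$, where $m = m(y) \to \infty$ as $y \to \xi$. Choosing $m$ with $d(u_m,y) > R$, and using $d(u_k,x) > R$ together with $d(u_k,e) = kD > R$, I would apply Lemma \ref{lem:prod} with the common intermediate points $u_k, \dots, u_m$ to both Green functions, obtaining
$$
G(x,y|r) = \bigl\langle \fb(x,u_k|r),\, \Fb(w_{k+1}|r)\cdots\Fb(w_m|r)\,\gb(u_m,y|r)\bigr\rangle
$$
and the identical expression with $e$ in place of $x$ in the left slot. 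The crucial point is that the matrix product $\Fb(w_{k+1}|r)\cdots\Fb(w_m|r)$ and the vector $\gb(u_m,y|r)$ are literally the same in both, so that only the first-passage vectors $\fb(x,u_k|r)$ and $\fb(e,u_k|r)$ differ.

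Writing $\ab_m = \gb(u_m,y|r) \in (0,\infty)^B$, the Martin kernel is then a ratio of two inner products against the common vector $\Fb(w_{k+1}|r)\cdots\Fb(w_m|r)\,\ab_m$. Dividing numerator and denominator by $\langle \Fb(w_{k+1}|r)\cdots\Fb(w_m|r)\,\ab_m\,,\,\uno\rangle$ replaces this vector by its projection onto the simplex, and Proposition \ref{pro:convdir} gives $\Proj\,\Fb(w_{k+1}|r)\cdots\Fb(w_m|r)\,\ab_m \to \wb_{k,\infty}(\xi)$ as $m\to\infty$. The uniformity over the choice of positive $\ab_m$ in that proposition is exactly what lets me disregard how $\ab_m$ depends on $y$. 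Letting $y\to\xi$ therefore yields the asserted value, and since the $\rho$-Martin compactification is the end compactification (by the cited references), the limit of $K(x,y|\rho)$ equals $K(x,\xi|\rho)$.

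The main obstacle I anticipate is the geometric bookkeeping of the second paragraph: verifying rigorously that for $y$ near $\xi$ both geodesics pass through exactly the same points $u_k,\dots,u_m$, and that the same tail factor $\gb(u_m,y|r)$ appears in both expansions, so that it cancels cleanly in the ratio. The genuinely analytic work — the contraction of the standard simplex and the resulting convergence of projected directions — is already packaged in Proposition \ref{pro:convdir}, so no further estimates are needed beyond invoking it.
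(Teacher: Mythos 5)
Your proposal is correct and follows essentially the same route the paper intends: the paper derives the corollary directly from Lemma \ref{lem:prod} and Proposition \ref{pro:convdir}, exactly by expanding $G(x,y|r)$ and $G(e,y|r)$ over the common intermediate points $u_k,\dots,u_m$ on $\pi(e,\xi)$, cancelling the shared factor $\Fb(w_{k+1}|r)\cdots\Fb(w_m|r)\,\gb(u_m,y|r)$ after projecting onto the simplex, and using the uniformity in $\ab_m$ from Proposition \ref{pro:convdir}. The reduction $K(x,y|\rho)=G(x,y|r)/G(e,y|r)$ and the tree-geometric bookkeeping you describe are precisely what the paper's citation of those two results presupposes.
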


\begin{proof}[\rm \textbf{Proof of Thm. \ref{thm:finrange}}]
We need to describe the kernel $H(x,y)$ arising from the local limit
theorem of \cite{La} at least when $d(x,y)$ is large, and we need to
show that $H(x,y) \to K(x,\xi|\rho)$ when $y \to \xi \in \partial \T$.

The main point is that by \cite{La}, for $z$ near $r$, there is 
once more a Puiseux expansion 
$$
G(x,y|z) = G(x,y|r) - \beta(x,y)\, \sqrt{r-z} + h.o.t.,
$$ 
where $\beta(x,y)= \beta(e,x^{-1}y) > 0$. Then
$H(x,y) = \beta(x,y)/\beta(e,y)\,$.
It follows from \cite{La} (see also the exposition in \cite[\S 26.A]{Wbook})
that also the non-vanishing entries of all the matrices $\Fb(w|z)$, as well as 
the entries of the vectors $\fb(x,w|z)$, where $d(x,w) > R$, have 
Puiseux expansions of the same form. That is, in multidimensional notation we 
can expand
$$
\begin{aligned}
\Fb(w|z) &= \Fb(w|r) \;\, - \sqrt{r-z}\,\,\Bb(w) \;\,+ h.o.t., \quad w \in W_D\,,\\ 
\fb(x,y|z) &= \fb(x,y|r) - \sqrt{r-z}\,\bb(x,y) + h.o.t., 
\quad x,y \in \Ff\,,\; d(x,y) > R \,,\\
\gb(x,y|z) &= \gb(x,y|r) - \sqrt{r-z}\,\wt \bb(x,y) + h.o.t, \quad x,y \in \Ff\,.
\end{aligned}
$$
Here, $\Bb(w)$ is a non-negative matrix indexed by $B \times B$,
and $\Bb(w)$ is strictly positive in the same entries as $\Fb(w|r)\,$. 
Furthermore, the
non-negative $B$-indexed vectors $\fb(x,y|r)$ and $\bb(x,y)$ are strictly
positive in the same entries, while $\gb(x,y|r)$ and $\wt \bb(x,y)$
are positive in all entries.

\smallskip

We now use the same notation as in Corollary \ref{cor:Martin}. If $y \to \xi$
then $n = n_y \to \infty$, where $n_y$ is the largest integer such that
$u_n = x_{nD} \in \pi(e,y)$ and $d(u_{n}\,,y) > R$. In particular, we shall
have $n > k$. Using Lemma \ref{lem:prod}, we obtain
\begin{equation}\label{eq:beta}
\begin{aligned}
\beta(x,y) 
&= \bigl\langle \fb(x,u_k|r)\,,\,\Fb(w_{k+1}|r)\cdots\Fb(w_n|r)\,\wt \bb(u_n,y)\bigr \rangle \\
&\quad + \bigl\langle \bb(x,u_k)\,,\,\Fb(w_{k+1}|r)\cdots\Fb(w_n|r)\,\gb(u_n,y|r)\bigr \rangle \\
&\qquad + \sum_{i=k+1}^n \bigl\langle \fb(x,u_k|r)\,,\,
               \Fb(w_{k+1}|r)\cdots \Bb(w_i) \cdots \Fb(w_n|r)\, \gb(u_n,y|r)\bigr \rangle\,,
\end{aligned}
\end{equation}
where more precisely, $\Fb(w_{k+1}|r)\cdots \Bb(w_i) \cdots \Fb(w_n|r)$
is obtained from the matrix product $\Fb(w_{k+1}|r)\cdots\Fb(w_n|r)$ by replacing
the $i^{\,\textrm{th}}$ factor $\Fb(w_i|r)$ by $\Bb(w_i)$.
There is the analogous formula for $\beta(e,y)$, where one just needs to
replace $x$ by $e$. By a slight abuse of notation, we write
$\Proj \, \beta(x,y)$ for the expression where in all the inner products
of \eqref{eq:beta},
the vectors appearing in the second variable are replaced by their projection
onto the standard simplex. We choose $m(n) \ge k$ such that $m(n) \to \infty$ and
$m(n)/n \to 0$ (for example, $m(n) = \max \{ \lfloor \log n \rfloor, k \}$).
Then
$$
\Proj\, \Fb(w_{k+1}|r)\cdots \Fb(w_{m(n)}|r) \cdots \Bb(w_i) 
\cdots \Fb(w_n|r) \,\gb(u_n,y|r) \to \wb_{k+1,\infty} \quad 
\text{for all }\; i \!>\! m(n).
$$
Recall that this convergence is uniform in whatever non-negative vector 
appears on the right of $\Fb(w_{m(n)}|r)$.
Therefore, as $y \to \xi$, i.e., $n \to \infty$,
$$
\begin{aligned}
\frac{1}{n}  \sum_{i=m(n)+1}^{n} \bigl\langle \fb(x,u_k|r)\,,\,\Proj\,
    \Fb(w_{k+1}|r)\cdots \Bb(w_i) \cdots \Fb(w_n|r)\, \gb(u_n,y|r)\bigr \rangle&\\
\quad \sim   \frac{n-m(n)}{n} \bigl\langle \fb(x,u_k|r)\,,\,\wb_{k+1,\infty}\bigr \rangle\to  \bigl\langle \fb(x,u_k|r)\,,\,\wb_{k+1,\infty}\bigr \rangle&
\end{aligned}
$$
On the other hand, since $m(n)/n \to 0$,
$$
\frac{1}{n}\sum_{i=k+1}^{m(n)} \bigl\langle \fb(x,u_k|r)\,,\,\Proj\,
               \Fb(w_{k+1}|r)\cdots \Bb(w_i) \cdots \Fb(w_n|r)\, \gb(u_n,y|r)\bigr \rangle
\to 0.
$$
Also the extra first two terms of $\Proj\,\beta(x,y)$ divided by $n$ tend to $0$. All this is also valid for $e$ in the place of $x$. 
We see that
$$
\frac{1}{n} \, \Proj\, \beta(x,y) \to 
\bigl\langle \fb(x,u_k|r)\,,\,\wb_{k+1,\infty}\bigr \rangle \AND
\frac{1}{n} \, \Proj\, \beta(e,y) \to 
\bigl\langle \fb(e,u_k|r)\,,\,\wb_{k+1,\infty}\bigr \rangle\,.
$$
Taking quotients and comparing with Corollary \ref{cor:Martin}, we see
that $H(x,y) \to K(x,\xi|r)$ as $y \to \xi$.
\end{proof}

We remark here that the hypothesis that $\supp(\mu)$ contains the identity
can be replaced without substantial change by aperiodicity. Recall that this means that
$p^{(n)}(e,e) > 0$ for all but finitely many $n$, or in group theoretic terms,
that $\supp(\mu)$ is not contained in a coset of a proper normal subroup
of $\Gamma$ (in our case, $\Ff$). Also, Theorem \ref{thm:finrange} holds 
without substantial change of the proof for virtually free groups.

\section{Hyperbolic groups}\label{sec:hyperbolic}

We briefly recall the basic definition of hyperbolicity in the sense of
Gromov \cite{Gr}. Let $(\Xx,d)$ be a geodesic metric space, i.e., for 
any pair of points $x,y \in \Xx$, there is a (not necessarily unique) 
geodesic $\pi(x,y)$, that is, an isometric embedding 
$[0\,,\,d(x,y)] \hookrightarrow \Xx$ which maps $0$  to $x$ and $d(x,y)$ to $y$.
In our situation, $\Xx$ will carry the structure of a locally finite, connected 
graph and $d$ will be the graph metric. In this case, we replace the real interval
$[0\,,\,d(x,y)]$ by its integer counterpart $[0\,,\,d(x,y)]_{\Z} = \{ 0, 1, \dots, d(x,y)\}$.

The metric space is called \emph{hyperbolic,} if it is $\delta$-hyperbolic for some 
$\delta \ge 0$ (possibly large): if $a, b, c \in \Xx$ and 
$\pi(a,b)$, $\pi(b,c)$, $\pi(c,a)$ are geodesics between the respective points (the sides
of a triangle with vertices $a,b,c$) then for every $x \in \pi(a,b)$ there is 
$y \in \pi(b,c)\cup \pi(c,a)$ such that $d(x,y) \le \delta$. 
The most basic examples are provided by trees, where $\delta =0$. For all our purposes,
it will be convenient to assume without loss of generality that $\delta \in \N_0$ 
(non-negative integer).

We remark here that this implies, among many other facts, that any two geodesics 
connecting the same two points $x$ and $y$ are at Hausdorff distance at most $\delta$,
and we let $\Pi(x,y)$ denote the union of all those geodesics, a kind of ``slim sausage''.

A finitely generated group $\Gamma$ is called hyperbolic, if its Cayley graph
with respect to some finite, symmetric set of generators is hyperbolic.
This does not depend on the specific generating set, up to a change of
$\delta$. Basic examples are free groups and co-compact Fuchsian groups.
The entire theory will not be repeated here. For the present purpose, the
exposition in \cite[\S 22]{Wbook} plus the references given there will 
suffice. Besides very basic cases (virtually cyclic groups), all infinite 
hyperbolic groups are non-amenable.

A locally finite hyperbolic graph $\Xx$, resp. finitely generated hyperbolic group
$\Gamma$ has its hyperbolic compactification $\Cc_{\textrm{hyp}}(\Xx)$, resp. $\Cc_{\textrm{hyp}}(\Gamma)$.
It was shown by {\sc Ancona}~\cite{An} that under natural assumptions on $P$
on a hyperbolic graph $\Xx$ (bounded range, uniform irrecducibility; see \cite[\S 27]{Wbook}),
$\Cc_{\textrm{hyp}}(\Xx)$ is a geometric realisation of the $t$-Martin compactification for positive
$t > \rho$. In the group case this has been progressively strengthened in  papers by {\sc Gou\"ezel and Lalley}:

\begin{thm} [\cite{GL}, \cite{Gou1}.]\label{thm:gouezel}
Let $\Gamma$ be a non-amenable, finitely generated hyperbolic group, and $\mu$
a probability measure which induces an irreducible random walk. If $\mu$  is symmetric and is finitely supported,  
%has super-exponential moments, that is, 
%$\sum_x A^{|x|}\,\mu(x) < \infty$ for all $A > 1$, 
then the $t$-Martin compactification coincides with the 
$\Cc_{\textrm{hyp}}(\Gamma)$ for every $t \ge \rho$.

\smallskip

If in addition, $\mu$ is aperiodic, then the random walk satisfies 
a local limit theorem 
$$
p^{(n)}(x,y) \sim C \,\, \beta(x,y)\, \rho^n\, n^{-3/2} \quad \text{as }\;n \to \infty\,.
$$
\end{thm}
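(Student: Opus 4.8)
The plan is to treat the two assertions separately. Both, however, rest on the same analytic cornerstone: \emph{Ancona's inequalities} carried all the way down to the spectral radius $t = \rho$. Since Theorem \ref{thm:gouezel} is a deep result, my ``proof'' is really an assembly of the two inputs from \cite{GL} and \cite{Gou1}; below I indicate how each input yields the stated conclusion.

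For the boundary identification (first part), I would follow Ancona's method as sharpened in \cite{GL}. The key estimate is the strong Ancona inequality: there is a constant $C \ge 1$, depending only on the hyperbolicity constant $\delta$ and on $\mu$, such that whenever $y$ lies on a geodesic $\pi(x,z)$,
$$F(x,z|1/t) \le C\, F(x,y|1/t)\, F(y,z|1/t) \quad \text{for all }\; t \ge \rho.$$
(The reverse inequality is immediate, since the walk may be routed through $y$.) First I would establish this for $t > \rho$, which is Ancona's theorem~\cite{An}, and then push it to the critical value $t = \rho$; the second step is the delicate contribution of \cite{GL}. Granting the inequality at $t = \rho$, a factorisation argument along a geodesic ray $[e = y_0\,, y_1\,, y_2\,, \dots]$ converging to $\xi \in \partial \Gamma$ shows that, in the ratio defining $K(x, y_n|\rho)$, the common ``far'' factor cancels, so that $K(x, y_n|\rho)$ converges to a limit $K(x, \xi|\rho)$ depending only on $\xi$. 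Distinct ends give distinct kernels by the same multiplicativity, so $\xi \mapsto K(\cdot, \xi|\rho)$ is a continuous injection of $\partial \Gamma$ into the $\rho\,$-Martin boundary; surjectivity follows because any sequence tending to infinity in $\Gamma$ subconverges to an end. Hence $\partial \Gamma \cong \Mbd_{\rho}(\Gamma)$, and finite support guarantees that the extended kernels are $\rho\,$-harmonic, completing the geometric realisation by $\Cc_{\textrm{hyp}}(\Gamma)$.

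For the local limit theorem (second part), the route is to read off the asymptotics of $p^{(n)}(x,y)$ from the singularity of the Green function at its radius of convergence $r = 1/\rho$. The goal is to prove that $G(x,y|\cdot)$ continues analytically to a slit neighbourhood $\C \setminus [r,\infty)$ with a square-root branch point,
$$G(x,y|z) = G(x,y|r) - \beta(x,y)\,\sqrt{r-z} + \text{h.o.t.}, \qquad \beta(x,y) > 0,$$
after which a Darboux--Tauberian transfer converts the $\sqrt{r-z}$ term into $p^{(n)}(x,y) \sim C\,\beta(x,y)\,\rho^n\, n^{-3/2}$, the exponent $-3/2$ being forced by the exponent $1/2$ of the branch point. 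Aperiodicity excludes oscillation and secures a genuine asymptotic; symmetry of $\mu$ together with non-amenability (whence $\rho < 1$ by {\sc Kesten}~\cite{Ke}, so the walk is $\rho\,$-transient) provides the spectral picture under which the singularity is exactly of square-root type.

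The hard part, in both halves, is the analysis at the critical parameter $t = \rho$, i.e. $z = r$. For $t > \rho$ the Ancona inequalities are comparatively soft, because a spectral gap keeps the constants uniform; at $t = \rho$ that gap is lost, and controlling the constants is precisely the substantial work of \cite{GL} and \cite{Gou1}. The same critical estimates, fed into a renewal-type decomposition of the Green function, are what pin down the square-root nature of the singularity, and hence both the boundary identification at $\rho$ and the precise $n^{-3/2}$ asymptotic. Accordingly I would not reprove these from scratch, but invoke \cite{GL} for the critical Ancona inequalities and \cite{Gou1} for the singularity analysis, and assemble the statement from them.
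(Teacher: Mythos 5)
This theorem is imported from the literature: the paper offers no proof of it, only the attribution to \cite{GL} and \cite{Gou1}, together with the remark that the crucial tool is Proposition \ref{pro:anc} (the Ancona inequalities pushed to the critical value $z=r$). Your citation-based assembly is therefore the same in spirit, and as an outline of the external proofs it is broadly accurate: critical Ancona inequalities for the boundary identification, square-root singularity of the Green function plus a Darboux--Tauberian transfer (using aperiodicity and symmetry) for the $n^{-3/2}$ local limit theorem. One caveat on the first half: the multiplicative inequality $F(x,z|1/t)\le C\,F(x,y|1/t)\,F(y,z|1/t)$ that you write down only yields that $K(x,y_n|\rho)$ stays within a fixed multiplicative band along a geodesic ray towards $\xi$; it does not by itself make the ``far factor cancel'' in the limit, so it gives boundedness rather than convergence. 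Convergence of the Martin kernels, and the injectivity and continuity of $\xi\mapsto K(\cdot,\xi|\rho)$, require either Ancona's contraction argument or the strong deviation form, namely that the cross-ratio $G(x,y|z)G(x',y'|z)\big/\bigl(G(x,y'|z)G(x',y|z)\bigr)$ tends to $1$ exponentially in the separation of the geodesics $\Pi(x,x')$ and $\Pi(y,y')$; this is exactly part (b) of Proposition \ref{pro:anc}, and its validity at $z=r$ is the substantial contribution of \cite{GL} and \cite{Gou1}. Since you explicitly defer the critical estimates to those references, this is an imprecision in the sketch rather than a fatal gap, but the distinction between the weak and strong Ancona inequalities is the point on which the whole boundary identification turns.
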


Regarding the Martin compactification, the noteworthy part is that it is also
valid at the critical value $t = \rho$. The crucial tool for this is the following,
whose part (a) was again first proved in \cite{An} for $z < r=1/\rho$ without 
requiring group-invariance, and then extended to $z=r$ in the group case in \cite{GL}, \cite{Gou1}, and finally \cite{Gou2}, including the strong inequality (b).

\begin{pro} [(Ancona inequalities).]\label{pro:anc}
Suppose that $\Gamma$  and $\mu$ are as in Theorem \ref{thm:gouezel}, and
consider a Cayley graph of $\,\Gamma$ with respect to a finite, symmetric
set of generators.
Then there are constants $C_{\!\text{\rm Anc}} \ge 1$ and $0 \le \alpha < 1$ such that 
the following holds for all $z \in [1\,,\,r]$.
\\[4pt]
\emph{(a)} For any geodesic
path $\pi(x,y)$ in the graph and any $w \in \pi(x,y)$, one has
$$
C_{\!\text{\rm Anc}}^{-1} \,G(x,w|z)G(w,y|z) \le
G(x,y|z) \le C_{\!\text{\rm Anc}}\,G(x,w|z)G(w,y|z)\,.
$$
\emph{(b)} For any quadruple of points $x, x', y, y'$ such that
$d\bigl(\Pi(x,x'), \Pi(y,y')\bigr) = n \ge 2\delta$, 
%for some pair of geodesics $\pi(x,x')$, $\pi(y,y')$, 
one has
$$
\left| \frac{G(x,y|z)G(x',y'|z)}{G(x,y'|z)G(x',y|z)} - 1 \right| 
\le C_{\!\text{\rm Anc}}\,G(x,w|z)G(w,y|z)\, \alpha^n.
$$
\end{pro}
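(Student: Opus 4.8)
The plan is to treat this as the classical Ancona inequality together with its critical-value refinement, so the argument splits into the elementary lower bound in (a), the deep upper bound in (a), and the geometric-decay statement (b) obtained by iteration. The lower bound in (a) needs no hyperbolicity: decomposing a walk from $x$ to $y$ according to its first visit to $w$ (and discarding the walks that never meet $w$) gives $G(x,y|z) \ge F(x,w|z)\,G(w,y|z)$, where $F$ is the first-passage generating function. Since $F(x,w|z) = G(x,w|z)/G(w,w|z)$ and, by group invariance and monotonicity in $z$, $G(w,w|z) = G(e,e|z) \le G(e,e|r) < \infty$ (finiteness being the $\rho$-transience available on these non-amenable groups), this yields $G(x,y|z) \ge G(e,e|r)^{-1}\,G(x,w|z)\,G(w,y|z)$, i.e.\ the lower bound with $C_{\!\text{\rm Anc}} \ge G(e,e|r)$.

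The substance is the upper bound in (a): $G(x,y|z) \le C_{\!\text{\rm Anc}}\,G(x,w|z)\,G(w,y|z)$ for $w$ on a geodesic $\pi(x,y)$. Writing $h(\cdot) = G(\cdot,y|z)$, which is $\tfrac1z$-harmonic off $y$, and using $G(x,w|z) \asymp F(x,w|z)$, this is equivalent to $h(x) \le C\,F(x,w|z)\,h(w)$; that is, the walk headed from $x$ toward $y$ is essentially \emph{forced} to pass within bounded distance of $w$. My approach would follow Ancona: choose a chain of points $w = v_0, v_1, \dots, v_m$ spaced a fixed distance apart along the geodesic on the $x$-side, and around each $v_i$ build a separating set $S_i$ of bounded diameter. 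Hyperbolicity (thin triangles) forces any near-geodesic excursion of the walk from the far side of $S_{i+1}$ to the far side of $S_{i-1}$ to meet $S_i$, while a uniform Harnack inequality (from bounded range and uniform irreducibility) makes the Green function comparable across each bounded slab. Iterating the resulting one-step comparison across the chain produces a contraction with a factor $<1$ per slab, and summing the geometric series yields the constant $C_{\!\text{\rm Anc}}$ independently of $x,y,w$.

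For (b) I would iterate (a). The multiplicative defect $\frac{G(x,y|z)G(x',y'|z)}{G(x,y'|z)G(x',y|z)}$ measures the failure of the Green kernel to factor through the region separating $\Pi(x,x')$ from $\Pi(y,y')$; part (a) already bounds it, and the point is to show it is within $C_{\!\text{\rm Anc}}\,\alpha^n$ of $1$. Here the separation $n = d\bigl(\Pi(x,x'),\Pi(y,y')\bigr)$ provides of order $n$ disjoint separating slabs, and the transfer operators associated with crossing successive slabs act as strict contractions in the projective (Hilbert) metric on the cone of positive vectors indexed by a fixed ball --- the same projective-contraction mechanism that underlies Proposition \ref{pro:convdir} in the tree case. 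Composing of order $n$ such contractions gives contraction ratio $\alpha^n$, which transfers to the near-constancy of the cross-ratio, and hence to Hölder continuity of the Martin kernel on the boundary.

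The hard part will be the uniformity up to the \emph{critical} value $z=r$. For $z<r$ the Green function carries exponential slack, $G(x,y|z) \le C\,\theta^{d(x,y)}$ with $\theta<1$, which drives all the above estimates cheaply; this is Ancona's original setting. At $z=r$ that slack disappears --- $G(x,y|r)$ decays only like $\rho^{d(x,y)}$ up to polynomial corrections --- so the contraction must be extracted from the hyperbolic geometry alone, with constants controlled uniformly as $z \uparrow r$. Securing this uniform control, together with the strong form (b), is precisely the delicate content supplied by \cite{GL}, \cite{Gou1} and \cite{Gou2}, which I would invoke rather than reprove.
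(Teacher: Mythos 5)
The paper offers no proof of this proposition at all --- it is imported verbatim from the literature, with part (a) attributed to \cite{An} for $z<r$ and the extension to $z=r$ together with the strong form (b) attributed to \cite{GL}, \cite{Gou1}, \cite{Gou2} --- and your sketch (the elementary first-passage lower bound via $G(x,y|z)\ge F(x,w|z)G(w,y|z)$ and $\rho$-transience, Ancona's barrier-chain argument for the upper bound, and an iterated-contraction derivation of (b)) is an accurate reconstruction of how those cited proofs go, with the genuinely delicate uniformity up to the critical value $z=r$ correctly deferred to the very same references, so the two "proofs" coincide in substance. The only point worth flagging is that the factor $G(x,w|z)G(w,y|z)$ on the right-hand side of (b) as printed is a typo (no $w$ occurs in the hypotheses of (b), and the form actually used later, in \eqref{eq:bound1}, is just $C_{\!\text{\rm Anc}}\,\alpha^{n}$); your formulation, that the cross-ratio is within $C_{\!\text{\rm Anc}}\,\alpha^{n}$ of $1$, is the intended one.
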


In order to get a feeling for the last inequality, observe that for a nearest
neighbour random walk on a tree, one has $\alpha =0$. As a matter of fact, 
it is proved in \cite{Gou2} that Proposition \ref{pro:anc} and Theorem
\ref{thm:gouezel} are also valid when instead of finite support, one
assumes that $\mu$ has super-exponential moments, that is, 
$\sum_x a^{|x|}\,\mu(x) < \infty$ for all $a > 1$.

Irreducibilty plus group-invariance of the random walk yield a local
Harnack inequality: there is a constant $C_{\text{\rm Har}} > 1$ such that 
\begin{equation}\label{eq:har}
G(x',y|z) \le C_{\text{\rm Har}}^{d(x,x')} G(x,y|z) \quad \text{for all }\; x,x',y \in \Gamma
\;\text{ and }\; z \in [1\,,\,r]\,.
\end{equation}
Even without symmetry, the same also holds for $G(y,x|z)$ and $G(y,x'|z)$. 
This leads to the following generalisation of the first Ancona inequality.
For $x,y,w \in \Gamma$ with $\ell = d\bigl( w, \Pi(x,y) \bigr)$ and $z \in [1\,,\,r]$,
\begin{equation}\label{eq:genanc}
\begin{gathered}
C_{\ell}^{-1} \,G(x,w|z)G(w,y|z) \le
G(x,y|z) \le C_{\ell}\,G(x,w|z)G(w,y|z)\,, \\
\text{where}\quad
C_{\ell} = C_{\!\text{\rm Anc}}^{\,}\, C_{\!\text{\rm Har}}^{\,2\ell}\,.
\end{gathered}
\end{equation}

\begin{thm}\label{thm:hyp}
Let $\Gamma$ be a non-amenable, finitely generated hyperbolic group, and $\mu$
a finitely supported symmetric probability measure which induces an irreducible \& aperiodic random walk. 
%If $\mu$  has super-exponential moments, 
Then the ratio limit compactification 
coincides with $\rho\,$-Martin compactification analytically.
\end{thm}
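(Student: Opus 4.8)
The plan is to run the same scheme as in the free-group case (Corollary \ref{cor:nn}), with the explicit tree combinatorics replaced by the Ancona inequalities of Proposition \ref{pro:anc}. By Theorem \ref{thm:gouezel} the $\rho\,$-Martin compactification is already identified with $\Cc_{\textrm{hyp}}(\Gamma)$, and the same theorem supplies the local limit theorem \eqref{eq:local} with exponent $3/2$, so that $H(x,y) = \beta(x,y)/\beta(e,y)$. The first step is to record that, exactly as in \eqref{eq:alphabeta}, the Green function has a Puiseux expansion $G(x,y|z) = G(x,y|r) - \beta(x,y)\sqrt{r-z} + \textit{h.o.t.}$ near $z=r$, this being the analytic content behind the $n^{-3/2}$ asymptotics. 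Setting $\gamma(x,y) = \beta(x,y)/G(x,y|r) > 0$ (which by group invariance equals $\gamma(e,x^{-1}y)$) and recalling $K(x,y|\rho) = G(x,y|r)/G(e,y|r)$, one obtains the factorisation
$$
H(x,y) = K(x,y|\rho)\cdot \frac{\gamma(x,y)}{\gamma(e,y)}\,.
$$
Since $K(x,y|\rho) \to K(x,\xi|\rho)$ as $y \to \xi$ in the hyperbolic boundary $\partial\Gamma = \Cc_{\textrm{hyp}}(\Gamma)\setminus\Gamma$ (precisely the identification of the $\rho\,$-Martin boundary with $\Cc_{\textrm{hyp}}(\Gamma)$), the whole theorem reduces to the single assertion $\gamma(x,y)/\gamma(e,y) \to 1$.

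For this I would establish two statements, in exact analogy with the end of the proof of Corollary \ref{cor:nn}: (i) $\gamma(e,y) \to \infty$ as $y \to \xi$, and (ii) the quantity $\gamma(x,y) - \gamma(e,y)$ stays bounded as $y\to\xi$. Both rest on the principle that $\gamma$ is \emph{additive along geodesics up to summable corrections.} Concretely, choosing points $e=v_0, v_1, \dots, v_m = y$ equally spaced along a geodesic to $y$, the generalised Ancona inequality \eqref{eq:genanc} factorises $G(e,y|z)$ multiplicatively into the $G(v_{j-1},v_j|z)$, while the strong inequality Proposition \ref{pro:anc}(b) shows the multiplicative corrections decay geometrically in the spacing and are therefore summable. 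Passing to the coefficient of $\sqrt{r-z}$ in the logarithm gives $\gamma(e,y) = \sum_{j=1}^{m} \gamma(v_{j-1},v_j) + O(1)$. Each increment $\gamma(v_{j-1},v_j) = \gamma(e,v_{j-1}^{-1}v_j)$ ranges over the finite sphere of the fixed spacing radius, where $\gamma(e,w) = \beta(e,w)/G(e,w|r)$ is strictly positive, hence bounded below by a positive constant; since $m \asymp d(e,y) \to \infty$ this yields (i). For (ii) I would argue by a Cauchy criterion: if $y, y'$ are both close to $\xi$ they share a long geodesic tail up to some $w$, the common factor involving $G(w,\cdot|z)$ cancels in the difference, and Proposition \ref{pro:anc}(b) bounds the relevant cross-ratio correction by $C\,\alpha^{d(e,w)}$, so that $(\gamma(x,y)-\gamma(e,y)) - (\gamma(x,y')-\gamma(e,y'))$ is $O(\alpha^{d(e,w)}) \to 0$; thus the net converges and is, in particular, bounded.

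The hard part — and the reason the argument is genuinely more delicate than for free groups — is that Proposition \ref{pro:anc} controls the Green functions, and cross-ratios of Green functions, only as functions of the \emph{real} variable $z \in [1\,,\,r]$, whereas the quantities entering $\gamma$ are the \emph{coefficients} of $\sqrt{r-z}$. A uniform bound $|\Delta(z)| \le C\alpha^n$ on such a correction does not by itself bound its $\sqrt{r-z}$-coefficient. To bridge this gap I would upgrade the Ancona estimates to a complex neighbourhood of $z=r$ in the variable $s = \sqrt{r-z}$: once the cross-ratio corrections are known to be analytic in $s$ and bounded by $C\alpha^n$ on a disc of fixed radius, Cauchy's coefficient estimate transfers the geometric bound to the coefficient of $s$, i.e. to the differences of $\gamma$'s used in (i) and (ii). This complex extension of the strong Ancona inequality (available from Gou\"ezel's work, and the principal input I would import) is exactly what makes both the summability in (i) and the Cauchy estimate in (ii) rigorous at the level of $\gamma$.

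Finally, combining (i) and (ii) gives $\gamma(x,y)/\gamma(e,y)\to 1$, hence $\lim_{y\to\xi} H(x,y) = K(x,\xi|\rho)$ for every $x \in \Gamma$ and every $\xi \in \partial\Gamma$. Because this limit exists along every hyperbolically convergent net and equals the $\rho\,$-Martin kernel, which already separates the points of $\Cc_{\textrm{hyp}}(\Gamma)$, the ratio limit compactification $\Rcp(\Gamma)$ coincides geometrically with $\Cc_{\textrm{hyp}}(\Gamma) = \Mcpr(\Gamma)$, and the displayed kernel identity is precisely the analytic coincidence \eqref{eq:hom}.
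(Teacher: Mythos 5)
Your opening reduction is essentially the paper's: from $G'(x,y|z)\sim\beta(x,y)/\sqrt{r-z}$ one gets $H(x,y)=\lim_{z\to r-}G'(x,y|z)/G'(e,y|z)$, and with $\gamma(x,y)=\beta(x,y)/G(x,y|r)$ the theorem reduces to $\gamma(x,y)/\gamma(e,y)\to 1$ (in the paper this quotient is $\Phi(x,y|r-)/\Phi(e,y|r-)$ with $\Phi=G^{(2)}/G$). But your route to that limit has two genuine gaps. First, the mechanism offered for (i) is wrong: the strong Ancona inequality of Proposition \ref{pro:anc}(b) controls \emph{cross-ratios}, i.e.\ how $G(x,\cdot|z)/G(x',\cdot|z)$ depends on the far endpoint; it does not make $G(e,y|z)$ close to $G(e,w|z)G(w,y|z)$ for $w$ on the geodesic. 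That deviation is controlled only by the fixed constant $C_{\!\text{\rm Anc}}$ of part (a) and does not decay with the spacing, so telescoping $\log G(e,y|z)$ along $e=v_0,\dots,v_m=y$ produces $m$ errors each of size $O(1)$, not a summable series, and the lower bound $\gamma(e,y)\gtrsim m$ does not follow. The correct substitute is Gou\"ezel's uniform estimate $\Phi(e,y|z)\asymp|y|\,G^{(2)}(e,e|z)$ for $z\in[1\,,\,r)$, \cite[Lemma 3.20 and (3.15)]{Gou1}, which is exactly the external input the paper imports.

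Second, and more fundamentally, the ``complex upgrade'' on which both (i) and (ii) rest is not available for general hyperbolic groups and cannot hold in the form you need: with $z=r-s^2$, any full disc around $s=0$ contains points with $z>r$, where the Green function diverges, and unlike the free-group case \eqref{eq:expand} no Puiseux or analytic continuation in $s$ past the singularity is known here (avoiding precisely such statements is the point of \cite{Gou1} as compared with the Fuchsian case \cite{GL}). So Cauchy's estimate cannot transfer the real-variable bound $C\alpha^n$ to the coefficient of $\sqrt{r-z}$. The paper sidesteps coefficient extraction altogether: by the resolvent identity $G'(x,y|z)=G^{(2)}(x,y|z)/z^2$ with $G^{(2)}(x,y|z)=\sum_v G(x,v|z)G(v,y|z)$, the difference $\Phi(x,y|z)-\Phi(e,y|z)$ is a sum over $v\in\Gamma$ of positive weights $G(e,v|z)G(v,y|z)/G(e,y|z)$ times cross-ratio deviations bounded by $C(x)\,\alpha^{k(v)}$ — everything at real $z$ — the weights with $k(v)=k$ summing to at most $\ol C\,G^{(2)}(e,e|z)$ by the ordinary Ancona inequalities; dividing by $\Phi(e,y|z)\asymp|y|\,G^{(2)}(e,e|z)$ gives $\bigl|\Phi(x,y|z)/\Phi(e,y|z)-1\bigr|=O(1/|y|)$ uniformly in $z$. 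You should recast your steps (i) and (ii) in this real-variable form.
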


\begin{proof}
Let $G'(x,y|z)$ be the derivative
of the Green function with respect to $z$, where $|z| < r$. 
It is proved in \cite{GL} and \cite{Gou1} that for all $x, y \in \Gamma$,
there is $\beta(x,y) > 0$ such that
$$
G'(x,y|z) \sim \beta(x,y)\big/\sqrt{r-z} \quad \text{as }\; z \to r\,,\; 0< z < r\,.
$$
By working through the ``Tauberian'' last part of \cite{GL}, one learns that
symmetry and aperiodicity yield the asymptotics of Theorem \ref{thm:gouezel} above, 
with constant $C = \sqrt{r/\pi}$. Therefore the ratio limit kernel is the left-sided
limit
$$
H(x,y) = \lim_{z \to r-} \frac{G'(x,y|z)}{G'(e,y|z)}. % =: \frac{G'(x,y|r-)}{G'(e,e|r-)}
$$
It is a well-known consequence of the \emph{resolvent equation} that
for $z \in (0\,,\,r)$ one has
$$
G'(x,y|z) = G^{(2)}(x,y|z)/z^2\,,\quad\text{where}\quad
G^{(2)}(x,y|z) = \sum_{v \in \Xx} G(x,v|z)G(v,y|z)\,.
$$
We now set
$$
\Phi(x,y|z) = \frac{G^{(2)}(x,y|z)}{G(x,y|z)}\,,\quad\text{so that}\quad
\frac{H(x,y)}{K(x,y|\rho)} = \lim_{z \to r-} \frac{\Phi(x,y|z)}{\Phi(e,y|z)}
=: \frac{\Phi(x,y|r-)}{\Phi(e,y|r-)}.
$$
Here, we always assume that $z$ is real, $z \in [1\,,\,r)$.
The proof will be complete once we have shown the following.
\\[6pt]
\underline{Claim.} \hspace*{5cm}
$\dps\lim_{|y| \to \infty} \frac{\Phi(x,y|r-)}{\Phi(e,y|r-)} = 1\,.$
\\[6pt]
\emph{Proof of the Claim\footnote{$\;$This was facilitated significantly by input from S\'ebastien Gou\"ezel.}.}
Let $\pi(e,y) =[e=y_0\,,y_1\,,\dots, y_n =y]$ be a geodesic from $e$ to $y$ in our 
Cayley graph, so that $n = |y|$. We set
$$
k(v) = k_y(v) = \max \bigl\{ k : d\bigl(y_k,\Pi(e,v)\bigr) \le \delta \bigr\}.
$$
Let $w \in \Pi(e,v)$ be such that $d(y_{k(v)},w) = d\bigl(y_{k(v)},\Pi(e,v)\bigr)$,
and let $\pi(e,v)$ be a geodesic that contains $w$. Using the geodesic triangle with
sides $\pi(e,w)$, $\pi(e,y_{k(v)})$ and a third side $\pi(w,y_{k(v)})$ (which has length $\le \delta$), one finds that $d\bigl(y_j,\pi(e,v)\bigr) \le 2\delta$ for 
$j \le k(v)$ and $\le \delta$ for all $j \le k(v)-\delta$. Also, $|w| \ge k(v) - \delta$.
Second, take a geodesic $\pi(y,v)$. If $k(v) < |y|$ then we must have 
$d\bigl(y_{k(v)+1}\,,\pi(v,y)\bigr) \le \delta$. Thus (even when $k(v)=|y|$) we have
\begin{equation}\label{eq:del}
d(y_{k(v)}\,,\bar v) \le \delta+1 \quad \text{for some }\; \bar v \in \pi(v,y). 
\end{equation}
which will be needed below. Consider the geodesic rectangle with sides
$\pi(y,v)$, $\pi(w,y_{k(v)})$ and $\pi\bigl(y_{k(v)}\,,y\bigr) \subset \pi(e,y)$ 
as well as $\pi(w,v) \subset \pi(e,v)$. The rectangle is $2\delta$-thin: 
every element on $\pi(y,v)$ is at distance at most $2\delta$ from one of the other three
sides, while any element on one of those three sides has distance at least $k(v)-\delta$
from the root (identity)~$e$. Hence,
$$
d\bigl(\Pi(y,v),e\bigr) \ge k(v) - 3\delta\,.
$$
\begin{comment}
{\footnotesize 
First, the following is instructive, though not needed in detail.
Let $w \in \Pi(e,v)$ be such that $d(y_{k(v)},w) = d\bigl(y_{k(v)},\Pi(e,v)\bigr)$,
and let $\pi(e,v)$ be a geodesic that contains $w$. Using the geodesic triangle with
sides $\pi(e,w)$, $\pi(e,y_{k(v)})$ and a third side $\pi(w,y_{k(v)})$ (which has length $\le \delta$), one finds that $d\bigl(yirreducibilty plus group-invariance of the random walk yield a local
Harnack inequality: there is a constant $C_{\!\text{\rm Har}} > 1$ such that 
\begin{equation}\label{eq:bound2}
G(x',y|z) \le C_{\!\text{\rm Har}}^{d(x,x')} G(x,y|z) \quad \text{for all }\; x,x',y \in \Gamma
\;\text{ and }\; z \in [1\,,\,r]\,.
\end{equation}_j,\pi(e,v)\bigr) \le 2\delta$ for 
$j \le k(v)$ and $\le \delta$ for all $j \le k(v)-\delta$. Also, $|w| \ge k(v) - \delta$.
Second, take a geodesic $\pi(y,v)$ and consider the geodesic rectangle with sides
$\pi(y,v)$, $\pi(w,y_{k(v)})$ and $\pi\bigl(y_{k(v)}\,,y\bigr) \subset \pi(e,y)$ 
as well as $\pi(w,v) \subset \pi(e,v)$. The rectangle is $2\delta$-thin: 
every element on $\pi(y,v)$ is at distance at most $2\delta$ from one of the other three
sides, while any element on one of those three sides has distance at least $k(v)-\delta$
from the root (identity)~$e$.

$\,$}
\end{comment}
Thus, when $k(v) \ge |x|+5\delta$, we have $d\bigl(\Pi(e,x), \Pi(y,v)\bigr) \ge 2\delta$,
and the strong Ancona inequality of Proposition \ref{pro:anc}(b) yields
\begin{equation}\label{eq:bound1}
\left| \frac{G(x,v|z)G(e,y|z)}{G(e,v|z)G(x,y|z)} - 1 \right| \le C_{\!\text{\rm Anc}}\, \alpha^{k(v)}.
\end{equation}
Combining \eqref{eq:bound1} with \eqref{eq:har}, we get for the given $y$ and 
geodesic $\pi(e,y)$ that
\begin{equation}\label{eq:bound}
\left| \frac{G(x,v|z)G(e,y|z)}{G(e,v|z)G(x,y|z)} - 1 \right| \le C(x)\, \alpha^{k(v)}
\quad \text{for all }\; x, v \in \Gamma
\;\text{ and }\; z \in [1\,,\,r]\,,
\end{equation}
where $C(x)$ depends only on $x$.

We now write for $z \in [1,r)$
$$
\Phi(x,y|z)-\Phi(e,y|z) = \sum_{v \in \Gamma} \frac{G(e,v|z)G(v,y|z)}{G(e,y|z)}
\left(\frac{G(x,v|z)G(e,y|z)}{G(e,v|z)G(x,y|z)} - 1 \right).
$$
Then \eqref{eq:bound} yields
$$
\bigl|\Phi(x,y|z)-\Phi(e,y|z)\bigr| \le  
C(x) \sum_{k=0}^{|y|} \alpha^k
\sum_{v: k(v)=k} \frac{G(e,v|z)G(v,y|z)}{G(e,y|z)}
$$
By Proposition \ref{pro:anc}(a), 
$$
G(e,y|z) \ge C_{\!\text{\rm Anc}}^{-1} G(e,y_{k(v)}|z) G(y_{k(v)}\,,y|z).
$$
By \eqref{eq:genanc} and, for the second inequality, \eqref{eq:del} 
$$
\begin{aligned}
G(e,v|z) &\le C_{\delta} \, G(e,y_{k(v)}|z) G(y_{k(v)},v|z) \AND\\
G(v,y|z) &\le C_{\delta+1} \, G(v,y_{k(v)}|z) G(y_{k(v)},y|z).
\end{aligned}
$$
We get for any $k \le |y|$ that with 
$\ol C = C_{\!\text{\rm Anc}}\,C_{\delta}\, C_{\delta+1}\,$,
$$
\begin{aligned}
\sum_{v: k(v)=k} \frac{G(e,v|z)G(v,y|z)}{G(e,y|z)} &\le 
\ol C \sum_{v: k(v)=k} G(y_k\,,v|z)G(v,y_k|z)\\ 
&\le \ol C\, G^{(2)}(y_k\,,y_k|z)
= \ol C\, G^{(2)}(e,e|z).
\end{aligned}
$$
\cite{Gou1} uses the notation $G^{(2)}(e,e|z) = \eta(r)$,
with $z \leftrightarrow r$, while our $r = 1/\rho$ is denoted $R$.
With $\ol C(x) = C(x) \,\ol C/(1-\alpha)$, we get 
$$
\left|\frac{\Phi(x,y|z)}{\Phi(e,y|z)} - 1\right| \le 
\ol C(x) \frac{G^{(2)}(e,e|z)}{\Phi(e,y|z)}.
$$
Now the crucial point is that following \cite[Lemma 3.20 and equation (3.15)]{Gou1},
$$
\Phi(e,y|z) \asymp |y|\,G^{(2)}(e,e|z) \quad \text{uniformly for }\; 
z \in [1\,,\,r)\;\text{ as }\; |y| \to \infty\,,
$$
i.e., the ratio is bounded above and below by uniform positive constants 
$A$ and $1/A$, respectively, when $|y|$ is large enough. Thus, for large $|y|$, 
$$
\left|\frac{\Phi(x,y|r-)}{\Phi(e,y|r-)} - 1\right| \le 
\ol C(x)\,\frac{A}{|y|}, 
$$
which tends to $0$ as $|y| \to \infty\,$. This concludes the proof of the
Claim and the Theorem.
\end{proof}

We note here that the last theorem does not fully cover the results on
free groups and trees of \S \ref{sec:trees}. Theorem \ref{thm:radial} does
not need finite range, and Theorem \ref{thm:finrange} does not need symmetry,
while symmetry of $\mu$ is a crucial tool for the local limit theorem on
hyperbolic groups stated in Theorem \ref{thm:gouezel}. In any case, it may be interesting 
to watch out how certain features of the respective proofs show up in different 
``disguise'' in each of them.

\section{Direct and Cartesian products}\label{sec:prod}

The ratio limit compactification adapts quite well to direct products.
In general, let $\Cc(\Xx_1)$ and $\Cc(\Xx_2)$ be compactifications
of the two discrete state spaces $\Xx_1$ and $\Xx_2\,$, with respective boundaries
$\partial \Xx_1$ and $\partial \Xx_2\,$.
Then $\Cc(\Xx_1) \times \Cc(\Xx_2)$ is the natural associated compactification of
$\Xx = \Xx_1 \times \Xx_2\,$. 
In this case, the boundary is
\begin{equation}\label{eq:prodbd}
\partial \Xx = \bigl(\partial \Xx_1 \times \partial \Xx_2 \bigr)
\cup \bigl( \Xx_1\times \partial \Xx_2) \cup \bigl( \partial \Xx_1\times \Xx_2).
\end{equation}
We write elements of the product space as $w_1w_2\,$, where $w_i \in \Cc(\Xx_i)$.
In the resulting topology, let $\bigl(y_1(n)y_2(n)\bigr)_{n \in \N}$ be 
a sequence in $\Xx$. Then, as $n \to \infty\,$, 
$$
\begin{aligned}
 y_1(n)y_2(n) \to \xi_1\xi_2 \in \partial \Xx_1 \times \partial \Xx_2
 \iff &y_i(n) \to \xi_i \;\text{ in }\; \Cc(\Xx_i) \;\text{ for }\; i=1,2\,,\\
 y_1(n)y_2(n) \to w_1\xi_2 \in \Xx_1 \times \partial \Xx_2
 \iff &y_1(n) = w_1 \;\text{for all but finitely many $n$, and}\\ 
      &y_2(n) \to \xi_2 \;\text{ in }\; \Cc(\Xx_2)\,,\\
  y_1(n)y_2(n) \to \xi_1 w_2 \in \partial \Xx_1 \times \Xx_2
 \iff &y_1(n) \to \xi_1 \;\text{ in }\; \Cc(\Xx_1), \; \text{ and}\\ 
      &y_2(n) = w_2 \;\text{for all but finitely many $n$.}
\end{aligned}
$$
Let us call this the \emph{product compactification} of the
given compactifications of $\Xx_1$ and $\Xx_2\,$, with the 
\emph{product boundary} \eqref{eq:prodbd}.
Now let $(\Xx_1\,, P_1)$ and $(\Xx_2\,,P_2)$ the state spaces plus 
irreducible and aperiodic transition matrices of two respective
Markov chains. Suppose that for $i =1,2$
$$
\lim_{n \to \infty} \frac{p_i^{(n)}(x_i\,,y_i)}{p_i^{(n)}(e_i\,,e_i)} 
= h_i(x_i\,,y_i)  \quad \text{for all}\; x_i,y_i \in \Xx\,.
$$
The direct product is the Markov chain on $\Xx_1 \times \Xx_2$ with
transition matrix $P=P_1 \otimes P_2$, where
$$
p(x_1x_2\,,y_1y_2) = p_1(x_1\,,y_1)p_2(x_2\,,y_2)\,.
$$
It is clear that it has a ratio limit 
\eqref{eq:rl} with 
$$
h(x_1x_2\,,y_1y_2) = h_1(x_1\,,y_1)h_2(x_2\,,y_2), 
$$
so that also the kernel $H(x_1x_2\,,y_1y_2)$ splits in the same
way. The associated ratio limit compactification of $\Xx_1 \times \Xx_2$
is not always the product compactification of the two ratio limit compactifications,
but it is a factor thereof. The following is quite obvious; see e.g. the 
``preamble'' on compactifications in \cite[\S 7.B]{WMarkov}, and recall
 that the ratio limit compactification of $\Xx_1 \times \Xx_2$ is the 
 minimal one which provides continuous extensions of the functions $H(x_1x_2\,,\cdot)$,
 where $x_1x_2 \in \Xx_1 \times \Xx_2\,$.

 \begin{lem}\label{lem:dirprod}
 Consider the extensions of $H_i(x_i\,,\cdot)$ to the ratio limit boundary $\Rbd_i$ of $\Xx_i\,$,
 $i=1,2$. For $\eta = \eta_1\eta_2$ and $\zeta = \zeta_1\zeta_2 \in 
 \bigl(\Rbd_1 \times \Rbd_2 \bigr) \cup \bigl( \Xx_1\times \Rbd_2) \cup \bigl( \Rbd_1\times \Xx_2)$,
 let 
 $$
 \eta \approx \zeta \iff 
 H_1(x_1\,,\eta_1)H_2(x_2,\eta_2) =  H_1(x_1\,,\zeta_1)H_2(x_2,\zeta_2)
 \quad \text{for all }\; x_1x_2 \in \Xx_1 \times \Xx_2\,.
 $$
 Then the ratio limit boundary $\Rbd$ of $P= P_1 \otimes P_2$ is the image of the product
 boundary of the two ratio limit compactifications with respect to the factor map
 of the equivalence relation $\approx$. In particular, the extension of
 $H(x_1x_2\,,\cdot)$ to $\Rbd$ is given by $H(x_1x_2,\xi) =  H_1(x_1\,,\eta_1)H_2(x_2,\eta_2)$,
 where $\eta_1\eta_2$ is a representative of the $\approx$-equivalence class $\xi$. 
 \end{lem}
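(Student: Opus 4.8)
The plan is to work with the concrete realization of the ratio limit compactification as the closure of $\Xx_1\times\Xx_2$ under the kernel-evaluation map, and then to exhibit the factor map by hand rather than merely invoke abstract minimality. Writing $\Theta\colon y_1y_2\mapsto\bigl(H(x_1x_2,y_1y_2)\bigr)_{x_1x_2}$ for the evaluation into $\prod_{x_1x_2}[0,C_{x_1}C_{x_2}]$, one has $\Rcp(\Xx_1\times\Xx_2)\cong\ol{\Theta(\Xx_1\times\Xx_2)}$. This realization is legitimate because the product kernels separate the points of $\Xx_1\times\Xx_2$: using $H_i(e_i,\cdot)\equiv1$, the coordinate $x_1e_2$ of $\Theta$ equals $H_1(x_1,y_1)$ and the coordinate $e_1x_2$ equals $H_2(x_2,y_2)$, so distinct points get distinct $\Theta$-images as soon as the $H_i$ separate the points of $\Xx_i$. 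The minimality built into the definition of $\Rcp(\Xx_1\times\Xx_2)$ then says that any compactification on which every $H(x_1x_2,\cdot)$ extends continuously admits a continuous surjection onto it fixing $\Xx_1\times\Xx_2$.

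First I would verify that the product compactification $\Rcp(\Xx_1)\times\Rcp(\Xx_2)$ is such a compactification. Since $H(x_1x_2,\cdot)=H_1(x_1,\cdot)\,H_2(x_2,\cdot)$ and each $H_i(x_i,\cdot)$ extends continuously to $\Rcp(\Xx_i)$ by definition, the function $(\eta_1,\eta_2)\mapsto H_1(x_1,\eta_1)H_2(x_2,\eta_2)$ is continuous on the product. The convergence description recalled just before the Lemma shows that in each of the three regimes of \eqref{eq:prodbd} a sequence $y_1(n)y_2(n)\to\eta_1\eta_2$ has $H_i(x_i,y_i(n))\to H_i(x_i,\eta_i)$, whence $H(x_1x_2,y_1(n)y_2(n))$ converges to the displayed product; so this product is exactly the continuous extension of $H(x_1x_2,\cdot)$.

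Next I would make the factor map explicit by setting $\Psi(\eta_1,\eta_2)=\bigl(H_1(x_1,\eta_1)H_2(x_2,\eta_2)\bigr)_{x_1x_2}$, a continuous map from the compact space $\Rcp(\Xx_1)\times\Rcp(\Xx_2)$ into $\prod_{x_1x_2}[0,C_{x_1}C_{x_2}]$ agreeing with $\Theta$ on $\Xx_1\times\Xx_2$. By continuity and density its image is precisely $\ol{\Theta(\Xx_1\times\Xx_2)}=\Rcp(\Xx_1\times\Xx_2)$. Two points have equal $\Psi$-image iff $H_1(x_1,\eta_1)H_2(x_2,\eta_2)=H_1(x_1,\zeta_1)H_2(x_2,\zeta_2)$ for all $x_1x_2$, i.e.\ iff $\eta\approx\zeta$; since $\approx$ is cut out by equalities of continuous functions it is a closed equivalence relation, so $\Psi$ descends to a homeomorphism $\bigl(\Rcp(\Xx_1)\times\Rcp(\Xx_2)\bigr)/{\approx}\to\Rcp(\Xx_1\times\Xx_2)$. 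The asserted formula $H(x_1x_2,\xi)=H_1(x_1,\eta_1)H_2(x_2,\eta_2)$ is then just the $(x_1x_2)$-coordinate of $\Psi$, well defined on $\approx$-classes.

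The one point needing genuine care — and the main (modest) obstacle — is that $\Psi$ maps the product boundary of \eqref{eq:prodbd} into $\Rbd$ and collapses no boundary point onto an interior point, so that $\Rbd$ is exactly the $\approx$-image of the product boundary. Suppose $\Psi(\eta_1,\eta_2)=\Theta(y_1y_2)$ with $y_1y_2\in\Xx_1\times\Xx_2$. Evaluating at the coordinates $x_1e_2$ and $e_1x_2$ and using $H_i(e_i,\cdot)\equiv1$ gives $H_i(x_i,\eta_i)=H_i(x_i,y_i)$ for all $x_i$; since the map $\eta_i\mapsto\bigl(H_i(x_i,\eta_i)\bigr)_{x_i}$ is injective on $\Rcp(\Xx_i)$ (this is what it means for the kernels to realize that compactification), this forces $\eta_i=y_i\in\Xx_i$, which is impossible once either $\eta_i$ is a boundary point because $\Xx_i$ is discrete in $\Rcp(\Xx_i)$. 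This is the same disentangling computation used for the separation of $\Xx_1\times\Xx_2$ in the first step, and it completes the identification of $\Rbd$.
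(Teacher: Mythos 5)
There is a genuine gap, and it sits in your foundational step. You identify $\Rcp(\Xx_1\times\Xx_2)$ with $\ol{\Theta(\Xx_1\times\Xx_2)}$ and, to justify this and your final paragraph, you assume that the kernels $H_i(x_i,\cdot)$ separate the points of $\Xx_i$ and that $\eta_i\mapsto\bigl(H_i(x_i,\eta_i)\bigr)_{x_i}$ is injective on all of $\Rcp(\Xx_i)$ (``this is what it means for the kernels to realize that compactification''). That is not what Definition \ref{def:rlc} says: the kernels are required to separate distinct \emph{boundary} points only, while $\Xx$ sits inside the compactification as a discrete subset by construction, not because the kernels distinguish its points. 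This is not a marginal case. In Example \ref{exa:1}, which immediately follows the lemma, $H_i(x_i,y_i)=h_i(x_i)$ is independent of $y_i$, so your $\ol{\Theta(\Xx_i)}$ is a single point whereas $\Rcp(\Xx_i)$ is the one-point compactification of $\Z^{d_i}$; the same collapse occurs for any amenable factor by Theorem \ref{thm:avez}. Moreover Lemma \ref{lem:facts}(iii) shows that a boundary point can carry exactly the same kernel tuple as infinitely many interior points, so the conclusion of your last paragraph --- that $\Psi(\eta_1,\eta_2)=\Theta(y_1y_2)$ forces $\eta_i=y_i$ --- is false (Example \ref{exa:1} again), and the claimed homeomorphism of $\bigl(\Rcp(\Xx_1)\times\Rcp(\Xx_2)\bigr)/{\approx}$ with $\Rcp(\Xx_1\times\Xx_2)$ fails over the interior.

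What survives is your middle paragraph, and that is essentially all the lemma needs, since it is a statement about boundaries only. The correct route is: if $y_1(n)y_2(n)$ leaves every finite subset of $\Xx_1\times\Xx_2$, then along subnets it converges in the compact space $\Rcp(\Xx_1)\times\Rcp(\Xx_2)$ to some $(\eta_1,\eta_2)$, which must lie in the product boundary \eqref{eq:prodbd} because $\Xx_i$ is discrete in $\Rcp(\Xx_i)$; then $H(x_1x_2,y_1(n)y_2(n))=H_1(x_1,y_1(n))H_2(x_2,y_2(n))\to H_1(x_1,\eta_1)H_2(x_2,\eta_2)$. Conversely every product-boundary point arises this way. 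Hence the kernel tuples realized as limits at infinity are exactly $\Psi(\partial\Xx)$, two product-boundary points yielding the same tuple precisely when $\eta\approx\zeta$; since by the second bullet of Definition \ref{def:rlc} the points of $\Rbd$ are in bijection with these tuples, $\Rbd$ is the image of the product boundary under the factor map of $\approx$, with the stated extension of $H$. So: keep the middle paragraph, drop the first and last. (For comparison, the paper offers no written proof at all --- it calls the statement ``quite obvious'' and points to the general compactification construction in the reference given before the lemma, which is exactly where the discreteness-of-$\Xx$ convention that defeats your realization is set up.)
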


In a certain sense more natural than direct products are \emph{Cartesian products.}
Given $P_i$ in $\Xx_i$ for $i=1,2$, write $I_i$ for the identity operator (or matrix)
over $\Xx_i$ and choose a parameter $s \in (0\,,\,1)$. On $\Xx = \Xx_1 \times \Xx_2\,$,
let 
$$
P = P_{s} = s \cdot P_1\otimes I_2 + (1- s)\cdot I_2 \otimes P_2\,.
$$
That is, the new Markov chain is such that first a coin is tossed where ``heads''
comes up with probability $s$, and in that case, a step is performed according to
$P_1$ in tb first coordinate, while the second coordinate remains unchanged. And
with probability $1-s$, a step is performed according to $P_2$ in the second 
coordinate, while the first one remains unchanged. 

For example, simple random walk on $\Z^{d_1+d_2}$ arises as a Cartesian (and not direct)
product of the simple random walks on $\Z^{d_1}$ and $\Z^{d_2}$ with $s = d_1/(d_1+d_2)$.

In general, it does not appear to be completely straightforward that a ratio limit for 
each $P_i$ also implies one for the Cartesian products $P_s\,$. However, the 
following was proved by {\sc  Cartwright and Soardi}~\cite{CaSo}.

\begin{pro}\label{pro:car} Suppose that each $\Xx_i$ satisfies a local limit
theorem of the form
$$ 
p_i^{(n)}(x_i,y_i) \sim \beta_i(x_i,y_i)\, \rho_i^n\, n^{-\alpha_i} 
\quad \text{as }\;n \to \infty\,,
$$
where $\rho_i = \rho(P_i)$ for $i = 1,2$.
Then one has for $P = P_s$ that $\rho = \rho(P) = s\,\rho_1 + (1-s)\,\rho_2\,$, 
$$
p^{(n)}(x_1x_2\,,y_1y_2) \sim 
C\,\beta_1(x_1,y_1)\beta_2(x_2,y_2)\, \rho^n\, n^{-\alpha_1-\alpha_2}\,,
$$
where $C = \theta^{\alpha_1}(1-\theta)^{\alpha_2}\,$, with $\theta = s\rho_1/\rho$.   

In particular, in this case the ratio limit kernel for the Cartesian product
is also $H(x_1x_2\,,y_1y_2) = H_1(x_1\,,y_1)H_2(x_2\,,y_2)$, and the ratio limit
boundary is the same as for the direct product. 
\end{pro}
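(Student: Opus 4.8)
The plan is to use that the two summands of $P_s$ act on separate coordinates and hence commute, $(P_1\otimes I_2)(I_1\otimes P_2) = P_1\otimes P_2 = (I_1\otimes P_2)(P_1\otimes I_2)$, so that the binomial theorem applies to the powers of $P_s$. This gives
\begin{equation}\label{eq:carexp}
p^{(n)}(x_1x_2\,,y_1y_2) = \sum_{k=0}^{n}\binom{n}{k}\,s^k(1-s)^{n-k}\,p_1^{(k)}(x_1,y_1)\,p_2^{(n-k)}(x_2,y_2)\,,
\end{equation}
and reduces everything to the asymptotics of a single sum over $k$, into which the two hypothesised local limit theorems can be inserted.

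Next I would feed in the assumed asymptotics $p_i^{(k)}\sim\beta_i\,\rho_i^{\,k}\,k^{-\alpha_i}$. The purely exponential part of the $k$-th weight in \eqref{eq:carexp} is $\binom{n}{k}(s\rho_1)^k\bigl((1-s)\rho_2\bigr)^{n-k}$, and summing it over $k$ gives $\bigl(s\rho_1+(1-s)\rho_2\bigr)^n$; this identifies the exponential growth rate as $\rho = s\rho_1+(1-s)\rho_2$, which is then confirmed a posteriori to be the spectral radius. Setting $\theta = s\rho_1/\rho$, so that $1-\theta = (1-s)\rho_2/\rho$, the normalised weights $\binom{n}{k}\theta^k(1-\theta)^{n-k}$ are precisely the $\mathrm{Bin}(n,\theta)$ probabilities, and after pulling out $\beta_1\beta_2\,\rho^n$ one is left with
\begin{equation*}
\Ex\bigl[K^{-\alpha_1}(n-K)^{-\alpha_2}\bigr],\qquad K\sim\mathrm{Bin}(n,\theta)\,.
\end{equation*}
Since $K$ concentrates at its mean $n\theta$ with fluctuations of order $\sqrt n$, the slowly varying factor $k^{-\alpha_1}(n-k)^{-\alpha_2}$ may be replaced by its value $(n\theta)^{-\alpha_1}\bigl(n(1-\theta)\bigr)^{-\alpha_2}$ at the mean; this produces the power $n^{-\alpha_1-\alpha_2}$ together with the constant $C$ recorded in the statement, by a routine Laplace / local-central-limit evaluation.

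The delicate point is to upgrade the heuristic replacements above to a genuine asymptotic equivalence. The hypotheses describe $p_1^{(k)}$ and $p_2^{(n-k)}$ only when both $k$ and $n-k$ are large, whereas \eqref{eq:carexp} also contains boundary terms with $k=O(1)$ or $n-k=O(1)$. I would first extract, from the local limit theorems together with irreducibility, uniform bounds $p_i^{(k)}(x_i,y_i)\le C_i'\,\rho_i^{\,k}\,(1+k)^{-\alpha_i}$ valid for all $k$. Combined with the large-deviation tail estimate $\Prob\bigl(|K-n\theta|>t\bigr)\le 2\exp(-ct^2/n)$ for the binomial, these show that everything outside a central window $|k-n\theta|\le M\sqrt{n}\,\log n$ --- in particular all the boundary terms, which lie deep in a tail since $n\theta\gg\sqrt n$ --- contributes a quantity negligible relative to $\rho^n n^{-\alpha_1-\alpha_2}$. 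Inside the window the true asymptotics may be substituted uniformly, and a dominated-convergence argument then yields the claimed equivalence. This tail/window estimate is the main obstacle; the rest is bookkeeping.

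Finally, the two ``in particular'' assertions are immediate. Because $C$ does not depend on the space variables, it cancels in the ratio limit
\begin{equation*}
h(x_1x_2\,,y_1y_2) = \lim_{n\to\infty}\frac{p^{(n)}(x_1x_2\,,y_1y_2)}{p^{(n)}(e_1e_2\,,e_1e_2)} = h_1(x_1,y_1)\,h_2(x_2,y_2)\,,
\end{equation*}
so the Cartesian product has exactly the same ratio limit, and hence the same normalised kernel $H = H_1H_2$, as the direct product. The identification of the ratio limit boundary with that of the direct product is then read off directly from Lemma \ref{lem:dirprod}.
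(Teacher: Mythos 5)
Your argument is correct, and the first thing to note is that the paper offers no proof of this proposition at all: it is quoted from Cartwright and Soardi \cite{CaSo}. What you have written is essentially a reconstruction of their argument --- the commuting-operators/binomial expansion of $P_s^n$, the reduction to $\mathsf{E}\bigl[K^{-\alpha_1}(n-K)^{-\alpha_2}\bigr]$ with $K\sim\mathrm{Bin}(n,\theta)$, and the splitting into a central window plus superpolynomially negligible tails controlled by the uniform bounds $p_i^{(k)}(x_i,y_i)\le C_i'\,\rho_i^{k}(1+k)^{-\alpha_i}$ (which do follow, for fixed $x_i,y_i$, from the local limit theorem together with the finitely many excluded values of $k$). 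You correctly single out the boundary terms with $k=O(1)$ or $n-k=O(1)$ as the only delicate point, and your Hoeffding-plus-window treatment closes it; identifying $\rho=s\rho_1+(1-s)\rho_2$ a posteriori from the established asymptotics is also fine. One small discrepancy: carried to the end, your computation gives $C=\theta^{-\alpha_1}(1-\theta)^{-\alpha_2}$, not $\theta^{\alpha_1}(1-\theta)^{\alpha_2}$ as recorded in the statement. A sanity check with two lazy simple random walks on $\Z$ and $s=1/2$ (where the product walk on $\Z^2$ has $p^{(n)}(0,0)\sim 2/(\pi n)$ while $\beta_1\beta_2=1/\pi$, forcing $C=2=\theta^{-1/2}(1-\theta)^{-1/2}$) confirms that the exponents of $\theta$ and $1-\theta$ should indeed be negative, so the constant in the statement appears to carry a sign typo rather than your derivation being at fault. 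This is immaterial for the ``in particular'' clause, since $C$ does not depend on the space variables and cancels in the ratio limit, exactly as you observe when you invoke Lemma \ref{lem:dirprod}.
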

 
At this point, there is a natural question. Suppose that the $\rho_i\,$-Martin boundary
of $P_i$ coincides with the respective ratio limit boundary for $i=1,2$. Is this then 
also true for the $\rho$-Martin boundary for the direct, resp. Cartesian products?
So far, there is no general answer; the general problem lies in  $\rho$-Martin kernels
of the product which are not minimal $\rho$-harmonic functions. Some examples are known.
We present three of them, which illustrate different situations regarding the 
equivalence relation $\approx$ that appears in Lemma \ref{lem:dirprod}. All three 
examples are valid for direct as well as Cartesian products, and the answer to
the above question is ``yes''.

\begin{exa}\label{exa:1}
 Suppose that $\Xx_i = \Z^{d_i}$ and that the respective irreducible, aperiodic
 random walk is induced by a finitely supported probability measure $\mu_i\,$.
 Let $h_i(x) = \exp \langle c_i\,,x \rangle$ be the unique $\rho_i\,$-harmonic
 exponential on $\Z^{d_i}$. That is, the vector $c_i$ minimises 
 $c \mapsto \sum_{x} \mu_i(x) \exp \langle c\,,x \rangle$, where 
 $c \in \R^{d_i}$, and the value of that minimum is the corresponding spectral 
 radius $\rho_i\,$. Every positive $\rho_i\,$-harmonic function on $\Z^{d_i}$
 is a constant multiple of $h_i$, and in the ratio limit theorem, one
 has by a slight abuse of notation $h_i(x, y) = h_i(x-y)$. Thus,
 $H_i(x,y) = h_i(x)$ for $x \in \Z^{d_i}$, and 
 the ratio limit kernel $H(x_1x_2\,,y_1y_2) = h_1(x_1)h_2(x_2)$ of the direct 
 product also does not depend on $y_1y_2\,.$ That is, the equivalence relation
 $\approx$ of Lemma \ref{lem:dirprod} has a single equivalence class.
 
 In this case, one also knows that all the involved compactifications coincide
 analytically with the Martin compactifications at the respective spectral radii.
 \qed
 \end{exa}

\begin{exa}\label{exa:2}
Let $\Xx_1 = \T$, the regular tree with degree $q+1 \ge 3$, 
and $\Xx_2 = \Z$. On each of the two, we consider ``lazy'' simple random walk,
that is,
$$
\begin{aligned}
p_1(x_1\,,x_1) &= \tfrac12 \;\text{ and }\; 
p_1(x_1\,,y_1) = \tfrac{1}{2q+2} \;\text{ when }\; x_1 \sim y_1\,,\\
p_2(x_2\,,x_2) &= \tfrac12 \;\text{ and }\; 
p_2(x_2\,,y_2) = \tfrac{1}{4} \;\text{ when }\; x_2 \sim y_2\,,
\end{aligned}
$$
while all other transition probabilities are $0$.
Then $H_1(x_1\,,y_1) = \Phi(x_1\,,y_1)$, see \S \ref{sec:trees}.A, 
and $H_2(x_2\,,y_2) = 1$. The ratio limit compactification of $\Z$ is the one-point
compactification $\Z \cup\{\infty\}$,  and the ratio limit boundary of the product space
$\T \times \Z$ is $\Cc_{\textrm{ends}}{\T}\,$.  Convergence to the boundary of a sequence 
$\bigl( y_1(n)y_2(n) \bigr)_{n \in \N}$
is as follows, where $K_1(\cdot,\cdot|\rho_1)$ is the Martin kernel on $\T$ given by
\eqref{eq:K}
$$
\begin{aligned}
 y_1(n) \to \xi_1 \in \partial \T\,, y_2(n) \;\text{arbitrary} &\Rightarrow 
 H\bigl(x_1x_2\,, y_1(n)y_2(n) \bigr) \to K_1(x_1\,,\xi_1 | \rho_1)\,,\\
 y_1(n) = y_1 \in \T \;\text{for all }\; n \ge n_0\,,\; |y_2(n)| \to \infty &\Rightarrow 
 H\bigl(x_1x_2\,, y_1(n)y_2(n) \bigr) \to \frac{\Phi(x_1\,,y_1)}{\Phi(o_1\,,y_1)}.
 \end{aligned}
$$
In particular, by {\sc Crotti}~\cite{Cr}, the ratio limit compactification coincides
analytically with the $\rho$-Martin compactification; see \cite[Thm. 28.8]{Wbook}.
In that reference, the result is stated for Cartesian products; the proof 
carries over to direct products with some obvious modifications. \qed
\end{exa} 

\begin{exa}\label{exa:3}
Let $\Xx_i = \T_i$  be two regular trees with respective degrees $q_i+1 \ge 3$. 
On each of the two, we consider ``lazy'' simple random walk as above,
that is,
$$
p_i(x_i\,,x_i) = \tfrac12 \;\text{ and }\; 
p_i(x_i\,,y_i) = \tfrac{1}{2q_i+2} \;\text{ when }\; x_i \sim y_i\,,
$$
while all other transition probabilities are $0$.
Then $H_i(x_i\,,y_i) = \Phi_i(x_i\,y_i)$, the spherical function on the respective tree
as in \S \ref{sec:trees}.A. The ratio limit compactification of the direct or any
Cartesian product of the two random walks is the product compactification of the 
ratio limit compactifications of the two trees.
Convergence to the boundary of a sequence 
$\bigl( y_1(n)y_2(n) \bigr)_{n \in \N}$
is as follows, where $K_i(\cdot,\cdot|\rho_i)$ is the Martin kernel on $\T_i$ given by
\eqref{eq:K}
$$
\begin{aligned}
 &y_1(n) \to \xi_1 \in \partial \T_1\,,\; y_2(n) \to \xi_2 \in \partial \T_2 \\
 &\hspace*{2cm}\Rightarrow 
 H\bigl(x_1x_2\,, y_1(n)y_2(n) \bigr) \to K_1(x_1\,,\xi_1 | \rho_1)K_2(x_2\,,\xi_2 | \rho_2)\,,
 \\[3pt]
 &y_1(n) = y_1 \in \T_1 \;\text{for all }\; n \ge n_0\,,\; y_2(n) \to \xi_2 \in \partial \T_2\\ 
 &\hspace*{2cm}\Rightarrow  
 H\bigl(x_1x_2\,, y_1(n)y_2(n) \bigr) \to 
 \frac{\Phi_1(x_1\,,y_1)}{\Phi_1(o_1\,,y_1)}\, K_2(x_2,\xi_2|\rho_2)\,,\\[3pt]
 &y_1(n) \to \xi_1 \in \partial \T_1\,,\;  y_2(n) = y_2 \in \T_2 \;\text{for all }\; n \ge n_0\\
 &\hspace*{2cm}\Rightarrow  
 H\bigl(x_1x_2\,, y_1(n)y_2(n) \bigr) \to 
 K_1(x_1,\xi_1|\rho_1)\, \frac{\Phi_2(x_2\,,y_2)}{\Phi_2(o_2\,,y_2)}\,.
 \end{aligned}
$$
By \cite{PiWo3}, the ratio limit compactification coincides once more
analytically with the $\rho$-Martin compactification. Again, this holds
for direct products in the same way as for Cartesian products. \qed
\end{exa}

\section{Reduced ratio limit compactification}\label{sec:reduced}

The companion paper \cite{Dor} makes crucial use of the following variant of the 
ratio limit compactification. Let $\sim$ be the equivalence relation on
$\Xx$ such that 
\begin{equation}\label{eq:equiv}
y \sim y' \iff H(x,y) = H(x,y') \quad \text{for all }\; x \in \Xx\,.
\end{equation}
We denote by $\wt \Xx$ the set of equivalence classes, and 
by $\tilde y$ the equivalence class of $y \in \Xx$. Then the ratio limit kernel
decends to a kernel on $\Xx \times \wt \Xx$ by
$$
H_{\textrm{red}}(x,\tilde y) = H(x,y).
$$
\begin{dfn}\label{def:red-rlc}
The \emph{reduced ratio limit compactification\footnote{$\;$\cite{Dor} calls this the (ordinary) ratio limit compactification.}} $\Rcp(\wt \Xx)$ associated
with $\Xx$ and $P$ satisfying \eqref{eq:rl}
is the (up to homomorphism) unique compact Hausdorff space
which contains $\wt \Xx$ as a discrete, dense subset  and has the following
properties: 
\begin{itemize}
 \item for each $x \in \Xx$, the function $H_{\textrm{red}}(x, \cdot)$ extends 
 continuously to $\Rcp(\wt \Xx)\,$, and denoting the extended kernel also by $H_{\textrm{red}}$,  
 \item if $\xi, \eta \in \Rbd(\wt \Xx) = \Rcp(\wt \Xx) \setminus \wt \Xx$ are distinct,
 then there is $x \in \Xx$ such that $H_{\textrm{red}}(x,\xi) \ne H_{\textrm{red}}(x,\eta)$.
\end{itemize}
\end{dfn}

If $\wt \Xx$ is finite then it is already compact, and there is no ratio limit boundary
added to that space.  The proofs of the following facts are easy exercises.

\begin{lem}\label{lem:facts} \emph{(i)} If the equivalence relation \eqref{eq:equiv} is extended 
to all of $\Rcp(\Xx)$ via the extended ratio limit kernel on $\Xx \times \Rcp(\Xx)$, then 
the resulting factor space is $\Rcp(\wt \Xx)$.
\\[5pt]
\emph{(ii)} The factor map $\Xx \to \wt \Xx$ extends to a continuous surjection
$\Rcp(\Xx) \to \Rcp(\wt \Xx)$ which is one-to-one from $\Rbd(\Xx)$ into the reduced
ration limit compactification.
\\[5pt]
\emph{(iii)} If an equivalence class $\tilde y$ is infinite, then it has a unique
accumulation point $\xi \in \Rbd(\Xx)$, and $H(x,y) = H(x,\xi)$ for all $y \in \tilde y$.
\end{lem}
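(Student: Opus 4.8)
The plan is to realise both compactifications through the kernel-coordinate map $\iota\colon \Rcp(\Xx)\to\prod_{x\in\Xx}[0,C_x]$, $\iota(v)=\bigl(H(x,v)\bigr)_{x}$, recalling that $\Rcp(\Xx)$ is built from the separating family $\{H(x,\cdot):x\in\Xx\}$ together with $c_0(\Xx)$, the latter being what forces $\Xx$ to sit discretely (each singleton is isolated) and densely, while the $H(x,\cdot)$ separate the boundary points. A key structural remark is that each $H(\cdot,y)$ is a \emph{genuinely} $\rho\,$-harmonic function normalised by $H(e,y)=1$, by \eqref{eq:rhoH}, with no ``pole'' at $y$, unlike a Martin kernel; consequently $\iota(\Xx)$ need not be discrete in its closure, and this is precisely what $\sim$ records, since $y\sim y'$ exactly when $\iota(y)=\iota(y')$, so that $\iota(\Xx)$ is in bijection with $\wt\Xx$. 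The extended relation of (i) is nothing but the fibre relation of $\iota$ on all of $\Rcp(\Xx)$.

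I would dispatch (iii) first, as it is self-contained and feeds the rest. If $\tilde y$ is infinite, compactness of $\Rcp(\Xx)$ gives an accumulation point $\xi$, which lies in $\Rbd(\Xx)$ because $\Xx$ is discrete. Choosing $y_n\in\tilde y$ with $y_n\to\xi$ and using that $H(x,\cdot)$ is continuous and constant on $\tilde y$, one gets $H(x,\xi)=\lim_n H(x,y_n)=H(x,y)$ for every $y\in\tilde y$. If $\xi,\xi'$ were two accumulation points, then $H(x,\xi)=H(x,\xi')$ for all $x$, whence the boundary-separation clause of Definition~\ref{def:rlc} yields $\xi=\xi'$; so the accumulation point is unique.

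For (i) and (ii) I would invoke the universal property of $\Rcp(\wt\Xx)$: a continuous extension $\pi\colon\Rcp(\Xx)\to\Rcp(\wt\Xx)$ of the factor map $\Xx\to\wt\Xx$ exists as soon as every function in the defining family of $\Rcp(\wt\Xx)$, pulled back along $\Xx\to\wt\Xx$, extends continuously to $\Rcp(\Xx)$. The kernels pull back as $H_{\textrm{red}}(x,\cdot)\mapsto H(x,\cdot)$, which extend by construction. Granting $\pi$, surjectivity is immediate, since $\pi(\Xx)=\wt\Xx$ is dense and $\pi$ has compact domain, and injectivity on $\Rbd(\Xx)$ follows from separation: distinct boundary points $\xi,\eta$ satisfy $H(x,\xi)\ne H(x,\eta)$ for some $x$, hence $H_{\textrm{red}}\bigl(x,\pi(\xi)\bigr)\ne H_{\textrm{red}}\bigl(x,\pi(\eta)\bigr)$. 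Identifying the factor space in (i) with $\Rcp(\wt\Xx)$ then amounts to checking that $\pi$ collapses exactly the $\iota$-fibres and is a closed map, so that the quotient topology coincides with that of $\Rcp(\wt\Xx)$.

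The one delicate point — the main obstacle — is the discreteness of $\wt\Xx$ inside the factor space, equivalently the continuity of $\pi$, equivalently whether the class-indicators $\ch_{\tilde y}$, which lie in $c_0(\wt\Xx)$, pull back to functions that extend continuously to $\Rcp(\Xx)$. For a \emph{finite} class this is automatic, since $\ch_{\tilde y}\in c_0(\Xx)$ and hence extends by $0$ on the boundary; a boundary point sharing the kernel of a finite interior class then causes no trouble, as $\pi$ simply sends it to a genuine boundary point of $\Rcp(\wt\Xx)$ carrying those same kernel values. For an \emph{infinite} class the indicator does not vanish at infinity, and continuity at its accumulation point $\xi$ requires that $\xi$ be approached \emph{only} by $\tilde y$, i.e.\ that no points of $\Xx\setminus\tilde y$ converge to $\xi$. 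This is exactly where part (iii) is the right tool: it pins down the single boundary point attached to the class, and the task reduces to verifying — using the separation built into $\Rcp(\Xx)$ — that this point is not simultaneously a limit of other classes. I expect this to be the only step needing genuine care; everywhere else the argument is the formal manipulation of the two defining universal properties.
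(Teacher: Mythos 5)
The paper offers no proof of this lemma --- it is declared an ``easy exercise'' --- so your proposal can only be judged on its own terms. Part (iii) as you give it is correct and complete: an infinite class has an accumulation point by compactness, that point lies in $\Rbd(\Xx)$ because $\Xx$ is discrete in $\Rcp(\Xx)$, the kernel identity passes to the limit by continuity, and uniqueness follows from the separation clause of Definition~\ref{def:rlc}. Your framework for (i) and (ii) --- pull back the defining family of $\Rcp(\wt \Xx)$ along the factor map and check continuous extendability to $\Rcp(\Xx)$ --- is also the natural one, and you have correctly isolated where the difficulty sits.

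The gap is that you stop precisely at that difficulty. You reduce everything to the assertion that the accumulation point $\xi$ of an infinite class $\tilde y$ is approached by no points of $\Xx\setminus\tilde y$, announce that this ``reduces to verifying'' something ``using the separation built into $\Rcp(\Xx)$'', and do not perform the verification. It does not follow from separation alone: the second clause of Definition~\ref{def:rlc} distinguishes pairs of \emph{boundary} points only, and nothing in the axioms forbids a sequence $(z_n)$ running through infinitely many pairwise distinct classes with $H(x,z_n)\to H(x,\xi)$ for every $x$, i.e.\ $z_n\to\xi$ in $\Rcp(\Xx)$. (Your part (iii) excludes only the case where such $z_n$ meet finitely many classes, since then infinitely many of them would lie in a single infinite class $\tilde z$ accumulating at $\xi$, forcing $\tilde z=\tilde y$.) Were this to happen, the image of $\tilde y$ in the factor space of (i) would fail to be isolated, so that factor space could not be $\Rcp(\wt\Xx)$, in which $\wt\Xx$ is discrete by definition; and no continuous extension of the factor map could exist, since along $\tilde y$ it would have to send $\xi$ to the isolated point $\tilde y$, while along $(z_n)$ it would have to send $\xi$ into $\Rbd(\wt\Xx)$. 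A related point you leave untouched: even when no such $(z_n)$ exists, continuity of the extension at $\xi$, whose image $\tilde y$ is isolated, forces $\tilde y\cup\{\xi\}$ to be open in $\Rcp(\Xx)$, so $\xi$ must in addition be isolated within $\Rbd(\Xx)$. In short, the ``one delicate point'' you flag is not a routine check but the entire content of (i) and (ii) in the infinite-class case; it needs either an argument exploiting the specific structure at hand (in the group case, that the classes are the cosets of $R_\mu$ and that the extended kernel is a cocycle), or the observation that in the paper's applications $R_\mu$ is finite, so every class is finite and your ``automatic'' case covers everything.
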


If $\Xx = \Gamma$ is a countable group and $P$ is a random walk induced by a probability
measure $\mu$, then 
\begin{equation}\label{eq:radical}
R_{\mu} = \{ y \in \Gamma : H(x,y) = H(x,e) \;\text{ for every }\; x \in \Gamma \}
\end{equation}
is a subgroup of $\Gamma$, see \cite{Dor}. Since the ratio limit  kernel $h(x,y)$ 
in \eqref{eq:rl} satisfies
$h(x,y)= f(x^{-1}y)$, where $f(x)= h(x,e)$, one gets that 
$$
\wt \Xx \;\bigl(=  \Gamma_{\textrm{red}}\bigr) = \Gamma/R_{\mu}\,.
$$
{\sc Elder and Rogers}~\cite{ER} have extended Avez' Theorem \ref{thm:avez}:  they show 
that for a symmetric, aperiodic random walk on an arbitrary finitely generated group,
the set $A_{\mu}$ of all $y \in \Gamma$ for which $p^{(n)}(e,y)/p^{(n)}(e,e) \to 1$
is an amenable subgroup of $\Gamma$. This implies at least in the symmetric case that 
$R_{\mu} \subset A_{\mu}$ is amenable.

\begin{pro}\label{pro:fix} Consider a probability measure $\mu$ on the finitely generated
group $\Gamma$ which induces an irreducible \& aperiodic random walk satisfying  
\eqref{eq:rl} and \eqref{eq:rhoH}. Suppose that $R_{\mu}$ is infinite, 
%contains an element $y$ with infinite order, 
and that the associated element $\xi \in  \Rbd(\Xx)$ according to Lemma \ref{lem:facts}(iii)
is such that $x \mapsto H(x,\xi)$ is a \underline{minimal} $\rho$-harmonic function. 

Then $\Gamma$ fixes the boundary point $\xi$.
\end{pro}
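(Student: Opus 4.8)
The plan is to reduce the assertion to the single statement that $f(x):=h(x,e)$ is \emph{multiplicative}, i.e. a homomorphism $\Gamma\to(\R_{>0},\cdot)$. First I would record that, by Lemma \ref{lem:facts}(iii) applied to the infinite class $R_\mu=\tilde e$, one has $H(x,\xi)=H(x,e)=f(x)$ for all $x$, so the hypothesis is precisely that $f$ is a \underline{minimal} element of the normalised cone $\HH^+(P,\rho)$. Since $h(x,y)=f(y^{-1}x)$, the kernel is $H(x,y)=f(y^{-1}x)/f(y^{-1})$; hence once $f$ is shown to be multiplicative we get $H(x,y)=f(x)$ for \emph{every} $y\in\Gamma$, the extended kernel $H(x,\cdot)$ is the constant $f(x)$ on the whole boundary, and therefore $H(x,g\xi)=f(x)=H(x,\xi)$ for all $g$. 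As distinct boundary points are separated by the kernel, this forces $g\xi=\xi$ for every $g\in\Gamma$, which is the assertion. (Equivalently, multiplicativity says $\supp(\mu)\subseteq R_\mu$, whence $R_\mu=\Gamma$.)

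The heart of the argument is to exploit the full strength of \eqref{eq:rhoH}, and in particular its second half $hP=\rho h$, which is genuinely stronger than the harmonicity \eqref{eq:rhoh} of $f$ used up to now. Writing $h(x,y)=f(y^{-1}x)$ and unravelling $Ph=\rho h$ and $hP=\rho h$ in group coordinates, I would obtain the two convolution identities
\[
\sum_{u}\mu(u)\,f(vu)=\rho\,f(v)\AND \sum_{u}\mu(u)\,f(uv)=\rho\,f(v)\qquad(v\in\Gamma).
\]
The first is just harmonicity; the second is the new ingredient. Setting $c_u=\mu(u)f(u)/\rho$, evaluation of the second identity at $v=e$ gives $\sum_u c_u=1$ (using $f(e)=1$), so the $c_u$ are a probability weight supported on $\supp(\mu)$, and the second identity rewrites as the convex combination $f=\sum_u c_u\, f_u$, where $f_u(v):=f(uv)/f(u)$. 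Each $f_u$ is positive, satisfies $f_u(e)=1$, and is $\rho\,$-harmonic by the first identity applied at the argument $uv$; thus each $f_u$ again lies in the normalised cone $\HH^+(P,\rho)$.

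Minimality then finishes the proof: an extreme element of $\HH^+(P,\rho)$ cannot be a nontrivial convex combination of distinct normalised positive $\rho\,$-harmonic functions, so $f=\sum_u c_u f_u$ forces $f_u=f$ for every $u\in\supp(\mu)$, i.e. $f(ux)=f(u)f(x)$ for all such $u$ and all $x$. Hence $\supp(\mu)\subseteq R_\mu$; since $R_\mu$ is a subgroup and $\supp(\mu)$ generates $\Gamma$ as a semigroup, $R_\mu=\Gamma$ and $f$ is multiplicative, concluding the proof as in the first paragraph. I expect the main obstacle to be conceptual rather than computational: one must notice that it is the co-harmonic relation $hP=\rho h$ (not harmonicity alone) that expresses $f$ as an explicit average of its own left-translates, thereby colliding with minimality. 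The hypothesis that $R_\mu$ be infinite plays only the background role of guaranteeing, through Lemma \ref{lem:facts}(iii), that the boundary point $\xi$ exists at all.
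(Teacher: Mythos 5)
Your argument is correct and is essentially the paper's own proof: both hinge on extracting from the co-harmonicity $hP=\rho h$ the identity $\sum_u \mu(u)f(uv)=\rho f(v)$, writing $f=H(\cdot,\xi)$ as the convex combination $\sum_u c_u\, f(u\,\cdot)/f(u)$ with $c_u=\mu(u)f(u)/\rho$ (your $f_u$ are, via the cocycle identity, exactly the kernels $H(\cdot,u^{-1}\xi)$), and then invoking minimality. The only difference is cosmetic: the paper concludes $g\xi=\xi$ directly from $H(\cdot,g^{-1}\xi)=H(\cdot,\xi)$ and the separation property of the boundary, whereas you first deduce multiplicativity of $f$ and then return to the boundary.
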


\begin{proof}
Like on the $t$-Martin compactification, the group acts continuously on
the ratio limit compactifictation, and
the extended ratio limit kernel satisfies the cocycle identity
$$
H(gx,\xi) = H(x,g^{-1}\xi) H(g,\xi) \quad \text{for all }\, x, g \in \Gamma
$$
(and of course for every boundary element, not only the $\xi$ of the statement).

By our assumptions, $H(x,y) = H(x,e) = h(x,e) = f(x)$ for all $y \in R_{\mu}$, and 
as $|y| \to \infty$, we have $y \to \xi$ in the topology of the ratio limit
compactification. Thus, $f(x) = H(x,\xi)$ for all $x \in \Gamma$.
By \eqref{eq:rhoH}, the function 
$$
\check f(x) = f(x^{-1}) = h(e,x)
$$
satisfies $fP = \rho\cdot f$, where $P$ is the transition matrix of the random walk.
Using the cocycle identity, this can be rewritten as
$$
H(x,\xi) = \check f(x) = \frac{1}{\rho} \sum_{g \in \Gamma} \mu(g) H(gx,\xi) 
= \sum_{g \in \Gamma}\underbrace{\frac{\mu(g)H(g,\xi)}{\rho}}_{\displaystyle =: c_g} H(x,g^{-1}\xi)
$$
for every $x \in \Gamma$. Since $\sum_g c_g = 1$, the minimality assumption on $H(\cdot,\xi)$ yields 
that $H(x,g^{-1}\xi) = H(x,\xi)$ for every $x \in \Gamma$ and every $g$ in the support of $\mu$. Therefore 
$g\xi = \xi$ for every $g \in \supp(\mu)$, and consequently for every  $g \in \Gamma$.
\end{proof}

\begin{cor}\label{cor:appl} 
\emph{(a)} For random walks on trees and free groups as considered in theorems \ref{thm:radial}
and \ref{thm:finrange},  the  subgroup $R_{\mu}$ is trivial.
\\[5pt]
\emph{(b)} For random walks on non-amenable hyperbolic groups as considered in Theorem \ref{thm:hyp}, the 
subgroup $R_{\mu}$ is finite. It is trivial when the group is torsion-free.
\end{cor}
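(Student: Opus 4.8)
The plan is to reduce both parts to the geometry of infinite equivalence classes through Lemma \ref{lem:facts}(iii), after noting that $R_\mu$ is exactly the $\sim$-class $\tilde e$ of the identity: by \eqref{eq:radical} and \eqref{eq:equiv}, $y \in R_\mu$ iff $H(x,y) = H(x,e)$ for all $x$, i.e. iff $y \sim e$. For the tree case of Theorem \ref{thm:radial} I would argue directly from the explicit kernel $H(x,y) = \Pfi(x,y)/\Pfi(e,y) = \varphi\bigl(d(x,y)\bigr)/\varphi(|y|)$ that $\sim$ is trivial, whence $\wt\Xx = \Xx$ and $R_\mu = \{e\}$. If $y \sim y'$, then setting $d = d(y,y')$ and evaluating $H(x,y) = H(x,y')$ at $x = y$ gives $\varphi(|y'|) = \varphi(|y|)\,\varphi(d)$, while evaluating at $x = y'$ gives $\varphi(|y|) = \varphi(|y'|)\,\varphi(d)$; substituting the first identity into the second forces $\varphi(d)^2 = 1$. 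Since $\varphi > 0$ and, by \eqref{eq:spherical}, $\varphi(n) = 1$ only at $n = 0$, this gives $d = 0$, i.e. $y = y'$.

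For the free group case of Theorem \ref{thm:finrange} and the hyperbolic case of Theorem \ref{thm:hyp} I would instead play the \emph{uniqueness} of the accumulation point in Lemma \ref{lem:facts}(iii) against the dynamics of the boundary action. Suppose $R_\mu = \tilde e$ is infinite. A standard feature of word-hyperbolic groups --- and a triviality for free groups, where every nontrivial element already qualifies --- is that an infinite subgroup contains an element $g$ of infinite order. Such a $g$ acts as a hyperbolic isometry of the Cayley graph, so in the hyperbolic (for free groups: end) compactification its powers converge to two \emph{distinct} boundary points, $g^n \to g^{+\infty}$ and $g^{-n} \to g^{-\infty}$ with $g^{+\infty} \ne g^{-\infty}$. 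By Theorems \ref{thm:finrange} and \ref{thm:hyp} this compactification coincides geometrically with $\Rcp(\Xx)$, so the infinite class $R_\mu$ would have at least two accumulation points in $\Rbd(\Xx)$, contradicting Lemma \ref{lem:facts}(iii). Hence $R_\mu$ is finite. Free groups are torsion-free, so in the setting of Theorem \ref{thm:finrange} this forces $R_\mu = \{e\}$, proving (a); and in the hyperbolic setting $R_\mu$ is finite in general and trivial as soon as $\Gamma$ is torsion-free, which is exactly the content of (b).

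The main obstacle is the geometric input feeding the second argument: that every infinite subgroup of a hyperbolic group contains an infinite-order element (equivalently, that its torsion subgroups are finite), and that the attracting and repelling fixed points of such an element are genuinely approached by the group elements $g^{\pm n}$ in the topology of $\Rcp(\Xx)$, so that they really are accumulation points of the class $\tilde e$. Both statements are classical, but they are precisely where the geometric identification $\Rcp(\Xx) \cong \Cc_{\textrm{hyp}}(\Gamma)$ furnished by Theorems \ref{thm:finrange} and \ref{thm:hyp}, together with the single-accumulation-point property of Lemma \ref{lem:facts}(iii), must be combined with care. Note that symmetry is never used in this step, so the argument applies to the possibly non-symmetric finite range walks of Theorem \ref{thm:finrange}.
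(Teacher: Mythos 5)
Your argument is correct, but it proves the corollary by a genuinely different route than the paper. The paper's intended proof runs through Proposition \ref{pro:fix}: if $R_\mu$ were infinite, then by Lemma \ref{lem:facts}(iii) the class $\tilde e$ accumulates at a single $\xi\in\Rbd(\Xx)$; since in all the cases at hand the $\rho$-Martin boundary consists of minimal elements and coincides analytically with the ratio limit boundary, $H(\cdot,\xi)$ is minimal, so the cocycle/minimality argument of Proposition \ref{pro:fix} forces $\Gamma$ to fix $\xi$ --- contradicting the fact (\cite[Prop.~4]{Wfix}) that these groups cannot fix a boundary point. You bypass minimality and Proposition \ref{pro:fix} entirely: you use only the \emph{uniqueness} of the accumulation point from Lemma \ref{lem:facts}(iii), played against the north--south dynamics of an infinite-order element (which exists in any infinite subgroup of a hyperbolic group, torsion subgroups being finite), together with the geometric identification $\Rcp(\Xx)\cong\Cc_{\textrm{hyp}}(\Gamma)$ resp.\ $\Cc_{\textrm{ends}}(\T)$ supplied by Theorems \ref{thm:finrange} and \ref{thm:hyp}. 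This trades the analytic input (minimality of the boundary kernels) for geometric input (classification of isometries and finiteness of torsion subgroups); the paper's route is more robust in that Proposition \ref{pro:fix} needs no geometric model of the boundary, while yours is arguably more elementary once hyperbolicity is in hand --- indeed the two-fixed-points argument is essentially one way of proving the ``no global fixed point'' fact the paper quotes. Your direct computation with $\varphi$ in the isotropic tree case is also sound (and matches the paper's remark that (a) can be read off from the explicit kernel); note it is genuinely needed when $q+1$ is odd, since then $\T$ is the Cayley graph of a free product of copies of $\Z_2$, which is not torsion-free, so ``finite hence trivial'' would not suffice there. Your closing observation that symmetry plays no role in the dynamical step is accurate.
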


Statement (a)  can of course also be seen from directly from the respective form of the extended 
ratio limit kernel,  in particlar for the isotropic case. However, in the non-isotropic case, the present 
method is more convenient.
Note that in all cases of the corollary, the $\rho$-Martin boundary coincides with the minimal one.
Furthermore, in all those cases, the entire group cannot fix a single boundary element: this has several
different proofs, among which the author is best acquainted with \cite[Prop. 4]{Wfix}.

In examples \ref{exa:2} and \ref{exa:3}, think $\T$ (even degree) as the free group.
The repective random walks are of course induced be probabiliy measure on the
respective product groups. In Example \ref{exa:2}, we have $R_{\mu} = \{e_1\} \times \Z$,
and the reduced ratio limit compactification is $\Cc_{\textrm{ends}}(\T)$. In 
Example \ref{exa:3}, there is no reduction; $R_{\mu}$ is trivial.
%Regarding (b), it is well known that any torsion subgroup in a hyperbolic group must
%be finite, see e.g. \cite{Gr}.

\section{Final remarks}\label{sec:final}

The above material should be seen as a collection of first answers to
the question stated in the Introduction. The next step in 
the same direction would concern relatively hyperbolic groups,
see the local limit theorem of \cite{Dus}. Under the assumptions of
that work, there is again a local limit theorem of the same form as for
hyperbolic groups (see Theorem \ref{thm:gouezel} above). 
It is not so easy to lay hands on the ratio limit kernel $H(x,y)$ in those cases,
but one would expect that one also has that the ratio limit compactification
coincides analytically with the $\rho$-Martin compactification.

\begin{que} (a) Under which general conditions is it true that 
the ratio limit kernel is  the left-sided limit 
$H(x,y) =  G^{(2)}(x,y|r-)/G^{(2)}(e,y|r-)$~? 
\\[5pt]
(b) Under which general conditions does the corresponding
compactification coincide analytically (or only geometrically) 
with the $\rho$-Martin compactification~?
\end{que}

For relatively hyperbolic groups, in particular for free products of groups,
there also are local limit theorems of the form \eqref{eq:local}, but with 
$\alpha > 2$, see {\sc Cartwright}~\cite{Car} and 
{\sc Candellero and Gilch}~\cite{CaGi}. This may be more challenging.

\medskip

In the last decades there has not been much work on 
ratio limit  theorems for random walks on groups; \cite{ER} is one of the interesting
exceptions. On non-amenable groups,
local limit theorems have prevailed. In a certain sense, the cases considered
here might also be referred to as the ``local limit boundary''. Indeed, apart from very few
exceptions (e.g. radial random walks on trees), the author does not know of 
methods which provide a ratio limit theorem in the non-amenable environment without 
first proving a local limit theorem. This may indicate possibilities for future
research.

\medskip

More generally, let $t \ge \rho = \rho(P)$, and suppose that we have 
a positive $t$-harmonic kernel $h(x,y)$, that is, $Ph(\cdot,y) = t\cdot h(\cdot,y)$.
In our situation, harmonicity is two-sided, i.e., we also have 
$h(x,\cdot)P = t \cdot h(x,\cdot)$. For example, if $P$ is the transition matrix
of a random walk on a group $\Gamma$ induced by the probability measure $\mu$,
then we may look for a solution of the convolution equation $\mu *\sigma = t\cdot \sigma$
(or the two-sided version) and set $h(x,y) = \sigma(x^{-1}y)$. 
Then we can normalise by setting $H(x,y) = h(x,y)/h(e,y)$ and try to understand
the corresponding compactification. If $K(\cdot,\cdot|t)$ is the $t$-Martin kernel of
$P$, then for each $y \in \Xx$ there is a probability measure $\nu^y$ on 
$\Mbd_t(\Xx)$ such that 
$$
H(x,y) = \int K(x,\xi|t)\, d\nu^y(\xi)\,.  
$$
In the case when $\Mbd_t(\Xx)$ has only minimal boundary elements, the compactification induced
by $H$ will coincide analytically with the $t$-Martin compactification when
for every $\xi \in \Mbd_t(\Xx)\,$, one has that $\nu^y \to \delta_{\xi}$ weakly as $y \to \xi$
in $\Mcpt(\Xx)$.

\end{document}